\newtheorem{theorem}{Theorem}
\newtheorem{lemma}[theorem]{Lemma}
\newtheorem{corollary}[theorem]{Corollary}
\newcommand{\href}[2]{#2}
\newcommand{\abs}[1]{\left\lvert#1\right\rvert}
\DeclareMathOperator{\ch}{ch}
\DeclareMathOperator{\sge}{\geqslant\!}
\DeclareMathOperator{\sle}{\leqslant\!}
\renewcommand{\ge}{\geqslant}
\renewcommand{\le}{\leqslant}
\newcommand{\col}[1]{edge-face $ #1 $-colouring\xspace}
\newenvironment{tikzgraph}
  {\begin{tikzpicture}
      [vertex/.style={circle, draw=black, fill=white, inner sep=0.5pt, minimum
        size=8pt},%4pt
       2-vertex/.style={circle, draw=black, fill=black, inner sep=0mm, minimum
        size=6pt},%
       3-vertex/.style={regular polygon,regular polygon sides=3, draw=black, fill=black, inner sep=0mm, minimum
        size=8pt},%
       4-vertex/.style={regular polygon,regular polygon sides=4, draw=black, fill=black, inner sep=0mm, minimum
        size=8pt},%
       edge/.style={semithick},%
       font=\scriptsize
      ]\begin{scope}}
  {\end{scope}\end{tikzpicture}}
\newcommand{\unit}{20pt}
\def \smallface {(0,0) .. controls (0.2*\unit,1.0*\unit) and (1.532*\unit,1.0*\unit) .. (1.732*\unit,0);}
\begin{document}
\title{Every plane graph of maximum degree $8$ has an edge-face $9$-colouring}
%running title: Edge-face 9-colouring, maximum degree 8
\author{Ross J. Kang\thanks{%
School of Engineering and Computing Sciences, Durham University, UK ({\tt ross.kang@gmail.com}). This research was conducted while this author was a Postdoctoral Fellow at McGill University, supported by the {\em National Sciences and Engineering Research Council of Canada} (NSERC).  He is currently supported by the {\em Engineering and Physical Sciences Research Council} (EPSRC), grant EP/G066604/1.}
\and Jean-S\'ebastien Sereni\thanks{%
CNRS (LIAFA, Universit\'e Denis Diderot), Paris, France
and Department of Applied Mathematics (KAM), Faculty of Mathematics and
Physics, Charles University, Prague, Czech Republic
({\tt sereni@kam.mff.cuni.cz}). This author's work was partially
supported by the French \emph{Agence Nationale de la Recherche} under reference
\textsc{anr 10 jcjc 0204 01}.}
\and Mat\v ej Stehl\'ik\thanks{%
UJF-Grenoble 1 / CNRS / Grenoble-INP, G-SCOP UMR5272 Grenoble, F-38031, France
({\tt matej.stehlik@g-scop.inpg.fr}).}
}
\date{\today}
%\keywords{graph colouring, edge-face colouring, plane graphs}
\maketitle

\begin{abstract}
An edge-face colouring of a plane graph with edge set $E$ and face set $F$ is a colouring of the elements of $E \cup F$ such that adjacent or incident elements receive different colours. Borodin proved that every plane graph of maximum degree $\Delta\ge10$ can be edge-face coloured with $\Delta+1$ colours. Borodin's bound was recently extended to the case where $\Delta=9$. In this paper, we extend it to the case $\Delta=8$.
\end{abstract}

%!TEX root = kss10.tex

\section{Introduction}
\label{sec:intro}

Let $G$ be a plane graph with vertex set $V$, edge set $E$ and face set $F$.
Given a positive integer $k$, an \emph{\col{k} of $G$} is a mapping
$\lambda:E\cup F\rightarrow\{1,2,\ldots,k\}$ such that
\begin{enumerate}
\item
$\lambda(e)\neq \lambda(e')$ for every pair $(e,e')$ of adjacent edges;
\item
$\lambda(e)\neq \lambda(f)$ for edge $e$ and every face $f$ incident to $e$;
\item
$\lambda(f)\neq \lambda(f')$ for every pair $(f,f')$ of adjacent faces
with $f\neq f'$.
\end{enumerate}
The requirement in $(iii)$ that $f$ and $f'$ be distinct is only
relevant for graphs containing a cut-edge; such graphs would not
have an edge-face colouring otherwise. Let $\chi_{ef}(G)$ be the
value of the smallest integer $k$ such that there exists an \col{k} of $G$.  Although this problem is well-defined for graphs with loops or multiple edges, we shall throughout the paper only consider graphs that are simple (and this requirement is necessary, for instance, in Lemma~\ref{smalladj}).  We comment here that the multigraph formed by replacing each edge in a triangle by $\Delta/2$ parallel edges is planar with maximum degree $\Delta$ and requires at least $3\Delta/2$ colours in an edge colouring (let alone an edge-face colouring).

Edge-face colourings were first studied by Jucovi\v{c}~\cite{Juc69}
and Fiam\v{c}\'{\i}k~\cite{Fia71}, who considered $3$- and $4$-regular
graphs.  A conjecture of Mel'nikov~\cite{Mel75} spurred research into
upper bounds on $\chi_{ef}(G)$ for plane graphs $G$ with $\Delta(G) \le
\Delta$.  For small values of $\Delta$, the best bounds known are
$\Delta + 3$ for $\Delta \in \{2,\ldots,6\}$~\cite{Bor87,SaZh97,Wal97}
and $\Delta + 2$ for $\Delta = 7$~\cite{SaZh01b}.  For $\Delta \ge 10$,
Borodin~\cite{Bor94} proved the bound of $\Delta+1$. This is tight, as
can be seen by considering trees. Recently, the second and third
authors~\cite{SeSt11} extended the $\Delta + 1$ bound to the case
$\Delta = 9$ by proving that every plane graph of maximum degree $9$ has
an \col{10}. Here, we settle the case $\Delta = 8$.
\begin{theorem}
\label{thm:main}
Every plane graph of maximum degree $8$ has an \col{9}.
\end{theorem}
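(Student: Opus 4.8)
The natural strategy is the \emph{discharging method} applied to a minimal counterexample. Suppose the theorem fails, and let $G$ be a counterexample minimising, say, $\abs{E}+\abs{V}$; we may assume $G$ is connected and fix a plane embedding. Assign to each vertex $v$ the charge $d(v)-4$ and to each face $f$ the charge $d(f)-4$. By Euler's formula together with the two handshake identities $\sum_v d(v)=2\abs{E}=\sum_f d(f)$, the total charge equals $4\abs{E}-4(\abs{V}+\abs{F})=-8$, which is negative. The whole proof then reduces to two tasks: first, establish a list of \emph{reducible configurations} that cannot occur in $G$; second, design local charge-redistribution rules under which the absence of every reducible configuration forces every vertex and every face to end with nonnegative charge, contradicting the fact that the charges sum to $-8$.

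For the reducibility part, the template is uniform. Given a configuration, delete or contract a few edges and vertices to obtain a smaller plane graph $G'$ of maximum degree at most $8$; by minimality $G'$ has an \col{9} $\lambda'$; one then argues that $\lambda'$ can be modified on the affected elements and extended to an \col{9} of $G$, a contradiction. The counting behind the extension is that an edge $uv$ is constrained by at most $d(u)+d(v)-2$ neighbouring edges together with the two faces it bounds, while a face is constrained by the edges on its boundary and by its neighbouring faces; whenever the number of forbidden colours at an uncoloured element is at most $8$, a free colour survives. When this naive count fails, the standard remedy is to liberate a colour by a Kempe-type swap along a two-coloured alternating chain of edges, and, for the face colours, along an alternating chain in the dual. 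The configurations involving a low-degree vertex or a small face incident to low-degree vertices are where most of the case analysis lives; I expect the reducibility of triangular faces whose incident vertices have only moderate degree to be the crux, precisely because the edge-colours, the edge--face constraints and the face--face constraints all interact on the three boundary edges simultaneously. This is also where simplicity of $G$ is used, as in Lemma~\ref{smalladj}, since the parallel-edge triangle noted in the introduction shows the corresponding statement is false for multigraphs.

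For the discharging part, the elements that begin with negative charge are vertices of degree at most $3$ (charge $-1$ or $-2$) and triangular faces (charge $-1$); every vertex of degree at least $4$ and every face of length at least $4$ starts with a nonnegative surplus of $d-4$. The rules should send this surplus from large faces and high-degree vertices inward to their small-degree neighbours and to incident triangles, in amounts tuned so that each recipient's deficit is exactly covered while no donor is driven negative. Because $\Delta=8$ leaves fewer spare colours than the already-settled cases $\Delta\ge9$, the reducible list must be longer and the discharging weights finer. The main obstacle is therefore the bookkeeping: simultaneously choosing, for each configuration, a reduction for which the extension argument goes through, and calibrating the discharging weights so that the configurations forced to be absent are \emph{exactly} those one has proved reducible. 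Verifying that these two lists match \textemdash{} that every potentially negative final charge can be blamed on a configuration already shown reducible \textemdash{} is the delicate step that ties the argument together.
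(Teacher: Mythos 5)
Your proposal identifies the right framework---discharging on a minimal counterexample, which is indeed the paper's approach---but it is not a proof; it is a description of the method with every substantive step left open. You state the two tasks (find reducible configurations; design redistribution rules so that the absence of those configurations forces nonnegative final charge) without carrying out either one, and for this theorem those two tasks are the entire content. The paper requires roughly twenty specific configurations (a $1$-vertex; triangles and $4$-faces incident to $2$-vertices; edges $uv$ on small faces with $\deg(u)+\deg(v)$ bounded by $9$, $10$ or $11$ according to how many $(\sle4)$-faces they bound; configurations along $2$-paths; several exceptional configurations around triangulated $8$-vertices; and two lemmas about $(\sge5)$-faces with loose edges and about adjacent $2$-vertices). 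Moreover, their reducibility proofs are not the generic ``count forbidden colours, then do a Kempe-type swap'' you describe: they hinge on the device of \emph{nice} colourings, in which all $(\sle4)$-faces are deliberately left uncoloured until the very end, on the notion of \emph{tight} edges whose colour sets partition $\{1,\dots,9\}$, and on carefully orchestrated chains of recolourings (see the paper's proofs for configurations C4, C5, D1--D4 and E1--E3). Without that device the naive counting fails in exactly the cases that matter, and nothing in your proposal supplies a substitute.

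On the discharging side the outline is likewise not executable as stated. The paper's rules use finely calibrated fractional amounts ($3/2$, $7/5$, $5/4$, $6/5$, $11/10$, $1$, $1/2$) keyed to the degree triples of triangles and to the notions of special and exceptional faces, and the verification for $7$- and $8$-vertices is a long case analysis in which savings at special faces (via Lemma~\ref{lem-new}) are balanced against losses to incident triangles; the authors note this calibration took several rounds of correction. Note also that your initial charge ($d-4$ on both vertices and faces, totalling $-8$) differs from the paper's ($2\deg(v)-6$ on vertices, $\deg(f)-6$ on faces, totalling $-12$); either normalisation is a legitimate consequence of Euler's formula, but the entire rule design and case analysis depend on this choice, and you give no evidence that any rule set compatible with your normalisation and with a provably reducible list of configurations exists. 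In short, what you have written is the standard opening of any discharging argument; the theorem's difficulty, and the paper's contribution, lies entirely in the parts you have deferred as ``bookkeeping.''
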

\noindent
Our result is a strengthening of the $\Delta = 7$ result of Sanders and Zhao~\cite{SaZh01b}; it can also be viewed as an extension of the recent work on $\Delta = 9$~\cite{SeSt11}.
The problem of finding the provably optimal upper bounds on
$\chi_{ef}(G)$ for plane graphs $G$ with $\Delta(G) \le \Delta$ remains open for $\Delta \in \{4,5,6,7\}$.

We prove Theorem~\ref{thm:main} by contradiction.
From now on, we let $G=(V,E,F)$ be a counter-example to the statement
of Theorem~\ref{thm:main} with as few edges as possible. That is, $G$ is
a plane graph of maximum degree $8$ and no \col{9}, but every plane
graph of maximum degree at most $8$ with less than $|E|$ edges has an \col{9}.
In particular, for every edge $e\in E$ the plane
subgraph $G-e$ of $G$ has an \col{9}.
First, we describe various structural properties of $G$ in
Section~\ref{sec:reddesc}; the proofs of these properties are given at the
end of this paper in Section~\ref{sec:redproof}. In Section~\ref{sec:disdesc}
we describe the discharging rules. In Section~\ref{sec:disproof} we use the
discharging rules and the structural properties of $G$ to obtain a contradiction,
and thus a proof of Theorem~\ref{thm:main}.

Our discharging procedure was developed through several rounds, with corrective adjustments and optimisations included in each round, starting from a na{\"i}ve scheme in which only the vertices of degree at least $7$ compensated for the deficit of charge on triangles.  
A breakthrough in the design of our strategy was the
realisation that Lemma~\ref{lem-new} below could allow us
to conserve considerable charge at degree $7$ or $8$ vertices incident to
faces of a particular type. We could then balance these
savings against the loss of charge to incident triangles with the
development of further reducible configurations.
As will become apparent, the analysis of the final charge of vertices of degree $7$ or $8$ is
particularly involved.

In the sequel, a vertex of degree $d$ is
called a \emph{$d$-vertex}. A vertex is an $(\sle d)$\emph{-vertex}
if its degree is at most $d$; it is an \emph{$(\sge d)$-vertex} if its
degree is at least $d$.
The notions of \emph{$d$-face}, \emph{$(\sle d)$-face}
and \emph{$(\sge d)$-face} are defined
analogously as for the vertices, where the \emph{degree} of a face is
the number of edges incident to it.
A face of length $3$ is called a \emph{triangle}.
For integers $a,b,c$, an \emph{$(\sle a,\sle b,\sle c)$-triangle} is a
triangle $xyz$ of $G$ with $\deg(x)\le a$, $\deg(y)\le b$ and $\deg(z)\le c$.
The notions of $(a,\sle b,\sle c)$-triangles, $(a, b,\sge c)$-triangles,
$(a,\sle b,c,d)$-faces, and so on, are defined analogously. A vertex is
\emph{triangulated} if all its incident faces are triangles.

%!TEX root = kss10.tex

\section{Reducible configurations}
\label{sec:reddesc}

For our proof of Theorem~\ref{thm:main}, we identify that some plane graphs are
\emph{reducible configurations}, i.e.~configurations that cannot be part of the
chosen embedding of $G$. Their reducibility follows from Lemmas~\ref{lem-new}--\ref{lem.E3}; these lemmas are proved in Section~\ref{sec:redproof}.  In this section, we give an explicit description of the reducible configurations as well as the statement of Lemma~\ref{lem-new}.

For convenience, we depict these configurations in Figure~\ref{fig:configs}.
We use the following notational conventions for vertices:
$2$-, $3$- and $4$-vertices are depicted by black bullets, black triangles and black squares, respectively;
a white bullet containing a number represents a vertex of degree that quantity;
an empty white bullet represents a vertex of arbitrary degree (but at least that shown in the figure).
For faces, we use the following conventions:
a straight line indicates a single edge;
a curved line indicates a portion of the face with an unspecified number of edges;
a curved face that is shaded grey represents an $(\sle4)$-face.

The following configurations are reducible. 
Note that, for any of the below, if an edge can be removed without affecting the prescribed incidence or facial structure, then the configuration remains reducible; for example, B6 modified by replacing the $6$ by a $5$ or $4$ is reducible.

\begin{itemize}
\item[\textbf{A0}] A $1$-vertex.
\end{itemize}

\subsubsection*{Configurations with faces incident to a $2$-vertex}

\begin{itemize}
\item[\textbf{A1}]  A triangle incident to a $2$-vertex.

\item[\textbf{A2}]  A $4$-face incident to a $2$-vertex and an $(\sle3)$-vertex.

\item[\textbf{A3}]  A $2$-vertex adjacent to a $3$-vertex and an $(\sle5)$-vertex.
\end{itemize}

\subsubsection*{Configurations with an edge incident to a $(\sle4)$-face}

\begin{itemize}
\item[\textbf{B1}]  An edge $uv$ that is incident to an $(\sle4)$-face, with $\deg(u) + \deg(v) \le 9$.

\item[\textbf{B2}]  A triangle $uvw$ with $\deg(u) + \deg(v) \le 10$ and $\deg(w) = 6$.

\item[\textbf{B3}]  A triangle $uvw$ with $uw$ incident to two $(\sle4)$-faces, and $\deg(u) + \deg(v) \le 10$ and $\deg(w) = 7$.

\item[\textbf{B4}]  A triangle $uvw$ with $uw$ adjacent to two $(\sle4)$-faces, $vw$ incident to two $(\sle4)$-faces, and $\deg(u) + \deg(v) \le 10$.
\end{itemize}

\subsubsection*{Configurations with an edge incident to two $(\sle4)$-faces}

\begin{itemize}
\item[\textbf{C1}]  An edge $uv$ that is incident to two $(\sle4)$-faces, with $\deg(u) + \deg(v) \le 10$.

\item[\textbf{C2}]  A triangle $uvw$ with $uv$ incident to two $(\sle4)$-faces, and $\deg(u) + \deg(v) \le 11$ and $\deg(w) = 6$.

\item[\textbf{C3}]  A triangle $uvw$ with $uv$ and $uw$ each incident to two $(\sle4)$-faces,
and $\deg(u) + \deg(v) \le 11$ and $\deg(w) = 7$.

\item[\textbf{C4}]  A triangle $uvw$ with $vw$ incident to the triangle $vwx$ and $wx$ incident to two $(\sle4)$-faces, and $\deg(u) = \deg(x) = 3$. 

\item[\textbf{C5}]  A triangle $uvw$ with $vw$ incident to the triangle $vwx$ and $wx$ incident to two $(\sle4)$-faces, and $\deg(u) + \deg(v) \le 10$ and $\deg(v) + \deg(x) \le 11$.
\end{itemize}

\subsubsection*{Configurations along a $2$-path}

\begin{itemize}
\item[\textbf{D1}]  A $2$-path $uvw$ such that $vwx$ is a triangle, with $uv$ incident to an $(\sle4)$-face,
$vw$ and $vx$ each incident to two $(\sle4)$-faces,
and $\deg(u) + \deg(v) \le 10$ and $\deg(v) + \deg(w) \le 11$.

\item[\textbf{D2}]  A $2$-path $uvw$ such that $vwx$ is a triangle,
with $uv$, $vw$ and $vx$ each incident to two $(\sle4)$-faces,
and $\deg(u) + \deg(v) \le 11$ and $\deg(v) + \deg(w) \le 11$.

\item[\textbf{D3}]  A $2$-path $uvw$ such that $vwx$ is a triangle, with $vx$ incident to two $(\sle4)$-faces, and $\deg(u) = 2$, $\deg(v) = 7$ and $\deg(u) = 3$.

\item[\textbf{D4}]  A $2$-path $uvw$ such that $vwx$ is a triangle,
with $vw$ and $vx$ each incident to two $(\sle4)$-faces,
and $\deg(u) = 2$, $\deg(v) = 7$ and $\deg(u) = 4$.
\end{itemize}

\noindent \emph{Note on configurations D1 and D2.}  An $(\sle4)$-face incident to $uv$ is not ruled out from also being an $(\sle4)$-face (distinct from $vwx$) incident to $vw$ or $vx$.  In this sense, the figures representing configurations D1 and D2 in Figure~\ref{fig:configs} belie the configurations' fuller forms.

\subsubsection*{Exceptional configurations}

\begin{itemize}
\item[\textbf{E1}]  A $4$-path $uvwxy$, such that $uvz$, $vwz$, $wxz$ and $xyz$ are triangles, with $yz$ incident to two $(\sle4)$-faces, and $\deg(v) = 3$, $\deg(x) = 4$.

\item[\textbf{E2}]  A $4$-path $uvwxy$, such that $uvz$, $vwz$, $wxz$ and $xyz$ are triangles, and $\deg(v) = 3$, $\deg(x) = 4$ and $\deg(y) = 6$.

\item[\textbf{E3}]  A triangulated $8$-vertex that is adjacent to both a $3$-vertex and a $4$-vertex.

\item[\textbf{E4}]  A $3$-path $uvwx$, with $uv$ incident to an $(\sle4)$-face, and $\deg(u)\le 5$, $\deg(v)=6$, $\deg(w)=2$ and $\deg(x)=3$.
\end{itemize}

\subsubsection*{Special lemmas for $(\sge5)$-faces.}

An edge $uv$ is \emph{loose} if $\deg(u)+\deg(v)\le8$.
The following lemma implies a general set of reducible configurations for $(\sge5)$-faces.  These configurations are not depicted in Figure~\ref{fig:configs}.
\begin{lemma}\label{lem-new}
Let $f$ be a $d$-face of $G$ incident to $x$ loose edges and $q$ vertices of degree $2$.
If $d\ge5$ and $x\ge1$, then $2d-q-x\ge9$.
\end{lemma}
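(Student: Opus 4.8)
The plan is to prove the statement in its contrapositive form, as a reducibility argument. I would show that the minimal counter-example $G$ cannot contain a $d$-face $f$ with $d\ge5$, incident to $x\ge1$ loose edges and $q$ vertices of degree $2$, for which $2d-q-x\le8$; the asserted inequality $2d-q-x\ge9$ then follows. So suppose such a face $f$ exists, and fix a loose edge $e=uv$ incident to $f$, letting $f'$ denote the other face incident to $e$. By minimality of $G$, the graph $G-e$ has an \col{9}, say $\lambda$, and in $G-e$ the faces $f$ and $f'$ have merged into a single face $h$. The aim is to extend $\lambda$ to an \col{9} of $G$, contradicting the choice of $G$.

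The extension proceeds by freeing some elements and recolouring them in a careful order. I would uncolour the $x$ loose edges incident to $f$ (the edge $e$ is already absent from $G-e$) together with the face $f$, while retaining the colour $c:=\lambda(h)$ on $f'$. I then recolour $f$, next colour $e$, and finally colour the remaining loose edges of $f$ one at a time. Since the loose edges incident to $f$ are momentarily uncoloured, only the non-loose incident edges constrain the colour of $f$; and since $f'$ (carrying $c$) is among the faces neighbouring $f$, the chosen colour of $f$ automatically differs from $c$, as is required across $e$. Each loose edge, when it is finally coloured, is adjacent to at most $6$ edges (its endpoints have degree sum at most $8$) and is incident to exactly two faces, hence sees at most $8$ forbidden colours; this remains true even when some of its loose neighbours are still uncoloured, so the greedy final stage always succeeds, as does the colouring of $e$.

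The heart of the matter is the count of forbidden colours for $f$, which I claim is at most $2d-q-x$. The term $-x$ comes from uncolouring the $x$ loose boundary edges, which leaves only $d-x$ incident edges to avoid. The term $-q$ comes from the vertices of degree $2$: at such a vertex $v_i$ of $f$, the two boundary edges at $v_i$ are its only incident edges and share their second incident face, so the faces neighbouring $f$ across these two edges coincide; more generally a maximal run of $k$ consecutive $2$-vertices forces all $k+1$ of its edges to border a single common face, a saving of exactly $k$. Summing over the runs, $f$ has at most $d-q$ distinct neighbouring faces, and adding the two estimates yields at most $(d-x)+(d-q)=2d-q-x\le8$ forbidden colours for $f$, so a free colour remains.

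I expect the main obstacle to be making this count rigorous when the boundary of $f$ is not a simple cycle: repeated boundary vertices, repeated neighbouring faces, or the case where $e$ is a cut-edge (so that $f'$ coincides with $f$) must all be handled. Fortunately each such degeneracy only forces colours to coincide or removes a genuine constraint, so I would phrase the argument as an upper bound on the number of \emph{distinct} forbidden colours for $f$ and check that every degeneracy can only lower this count. The only remaining checks are routine: that $f'$ never needs recolouring, since its colour $c$ stays valid once $e$ is coloured to differ from it, and that the constraint lists used for $e$ and for the loose edges are exhaustive.
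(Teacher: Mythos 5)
Your proof is correct and follows essentially the same route as the paper's: by minimality, get a colouring after deleting loose edge(s) of $f$, colour $f$ using the identical key count of at most $d-x$ coloured incident edges plus at most $d-q$ distinct adjacent faces (each $2$-vertex identifying the faces across its two boundary edges), and finish by greedily colouring the loose edges, each of which sees at most $6+2=8$ constraints. The only difference is technical rather than conceptual: the paper deletes all $x$ loose edges at once and works with the resulting merged face, whereas you delete a single loose edge $e$ and uncolour the remaining loose ones together with $f$ — a variant that, if anything, keeps the face bookkeeping slightly cleaner since only $f$ and $f'$ merge.
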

\noindent 
For $(\sge6)$-faces, we require one more configuration not depicted in Figure~\ref{fig:configs}.
\begin{lemma}\label{lem-newnew}
Let $u$ and $v$ be two adjacent $2$-vertices in $G$.
If $u' \ne v$ and $v'\ne u$ are neighbours of $u$ and $v$, respectively, then $u' = v'$ and $\deg(u')=8$.
\end{lemma}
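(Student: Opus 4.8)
The plan is to exploit the minimality of $G$: every proper plane subgraph of $G$ admits an \col{9}, and I will show that if the conclusion fails then such a colouring of a suitable subgraph extends to all of $G$, contradicting the choice of $G$. First I would fix the local picture. Since $u$ and $v$ are $2$-vertices, $u'$ is the unique neighbour of $u$ other than $v$, $v'$ is the unique neighbour of $v$ other than $u$, and the walk $u'uvv'$ lies on the boundary of both faces $f$ and $g$ meeting at the edge $uv$. As $\deg(u)+\deg(v)=4$, the edge $uv$ cannot be incident to an $(\le4)$-face: a $3$-face would be a triangle incident to a $2$-vertex, and a $4$-face would fall under configuration B1. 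Moreover Lemma~\ref{lem-new} forbids a $5$-face through $uv$, since such a face carries $q\ge2$ vertices of degree $2$ and $x\ge1$ loose edge, whence $2\cdot5-q-x\le7<9$. Hence both $f$ and $g$ are $(\ge6)$-faces.

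Next I would set up the reduction. Delete $u$ and $v$ together with the edges $uu'$, $uv$ and $vv'$ to obtain $G^{\ast}$, and let $\lambda^{\ast}$ be an \col{9} of $G^{\ast}$. This deletion merges $f$ and $g$ into a single face $H$ (when $u'\ne v'$) or otherwise reorganises the faces around the common neighbour in a controlled way. To recover an \col{9} of $G$ it suffices to colour the three edges $uu',uv,vv'$ and to recolour the one face that splits off from $H$. Counting forbidden colours, the edge $uu'$ avoids the $\deg(u')-1$ other edges at $u'$ together with the two faces $f,g$, hence admits a free colour whenever $\deg(u')\le7$, and symmetrically for $vv'$; the edge $uv$ avoids only four colours and is always colourable. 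When $u'=v'$, write $w$ for their common neighbour: the two edges $wu,wv$ are adjacent and both lie at $w$, so colouring them in turn, the first avoids $\deg(w)$ colours and the second $\deg(w)+1$, and a free colour exists for both precisely when $\deg(w)\le7$. Thus in this case the colouring extends, contradicting the minimality of $G$, unless $\deg(w)=8$ --- which is the asserted conclusion.

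It remains to exclude $u'\ne v'$, and here is where the analysis becomes delicate. When $u'\ne v'$ the constraints at $u'$ and at $v'$ are independent, so if both $\deg(u')\le7$ and $\deg(v')\le7$ the colouring again extends; hence at least one endpoint, say $u'$, has degree $8$. The main obstacle is then twofold. First, one must recolour the large face that splits off from $H$: a $(\ge6)$-face may a priori see many colours on its incident edges and neighbouring faces, and I would use Lemma~\ref{lem-new} --- which bounds the loose edges and $2$-vertices on $f$ and $g$, and hence forces repetitions among the surrounding colours --- to guarantee a free colour. Second, when $\deg(u')=8$ the edge $uu'$ is the critical one, and I would use the freedom afforded by $u'\ne v'$ (the fact that $uu'$ and $vv'$ are non-adjacent, so that the edge colours and the face recolouring can be coordinated, and that an edge at the saturated vertex $u'$ may itself be recoloured via a short interchange) to free a colour for $uu'$ and complete the extension. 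This yields the desired contradiction, so that $u'=v'$ and $\deg(u')=8$. I expect the coordination of the face recolouring with the colouring of the three new edges in the near-tight cases ($\deg=7$, and $u'\ne v'$ with a degree-$8$ endpoint) to be the most technical part of the argument.
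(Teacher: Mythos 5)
Your reduction has a genuine gap at the face-recolouring step, and it is not one that Lemma~\ref{lem-new} can close. You correctly show that both faces $f$ and $g$ incident to $uv$ are $(\sge6)$-faces, but that is precisely what breaks the deletion argument: removing $u$, $v$ and the three edges merges $f$ and $g$ into a single face $H$ of $G^{*}$, and when the configuration is restored only one of $f,g$ can inherit the colour $\lambda^{*}(H)$ --- they are adjacent (every edge at a $2$-vertex is incident to both of its faces, so $f$ and $g$ share $uu'$, $uv$ and $vv'$) and therefore need distinct colours. The other face, say $g$, must get a fresh colour, and it can see up to $\deg(g)-3$ colours on its coloured edges, up to $\deg(g)-3$ colours on the faces across those edges, plus the colour of $f$: up to $2\deg(g)-5$ colours in all, which reaches $9$ as soon as $\deg(g)\ge7$. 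A nice colouring does not permit leaving an $(\sge5)$-face uncoloured, so greedy completion is unavailable. Your proposed rescue via Lemma~\ref{lem-new} is backwards: that lemma is a \emph{lower} bound $2d-q-x\ge9$ on faces of the minimal counterexample $G$ (here it yields only $\deg(f),\deg(g)\ge6$, i.e.\ exactly the regime where the greedy count fails); it says nothing about how many colours a face sees in a partial colouring of a subgraph. A second gap: the case $u'\ne v'$, which is the real content of the first assertion, is never actually excluded --- ``a short interchange at the saturated vertex'' is a hope, not an argument.

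The paper's proof avoids both problems by running no new colouring argument at all. First, $u'=v'$ is immediate from Lemma~\ref{smalladj} applied to the $2$-vertex $u$: its neighbours are $v$ and $u'$, and $\deg(v)=2\le6$, so $v$ and $u'$ are adjacent, which forces $u'=v'$. (Note that Lemma~\ref{smalladj} is proved by \emph{contracting} the edge at the $2$-vertex rather than deleting anything; contraction keeps the two incident faces as faces, so their colours transfer directly --- this is the standard way around exactly the face-merging obstruction above.) Second, $\deg(u')=8$ is purely structural: by Lemma~\ref{lem:a1} the triangle $uvu'$ is not a face, so both sides of the Jordan curve through $u,v,u'$ contain vertices, and Lemma~\ref{cutvertex} then forces $u'$ to have at least three neighbours strictly inside and at least three strictly outside; together with $u$ and $v$ this gives $\deg(u')\ge8$, hence $\deg(u')=8$ since $\Delta(G)=8$. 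If you want to salvage your write-up, replace the deletion of $u,v$ by the contraction of $uv$ (or simply invoke Lemma~\ref{smalladj}), and replace the colouring argument for the degree claim by this cut-vertex count.
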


\begin{figure}
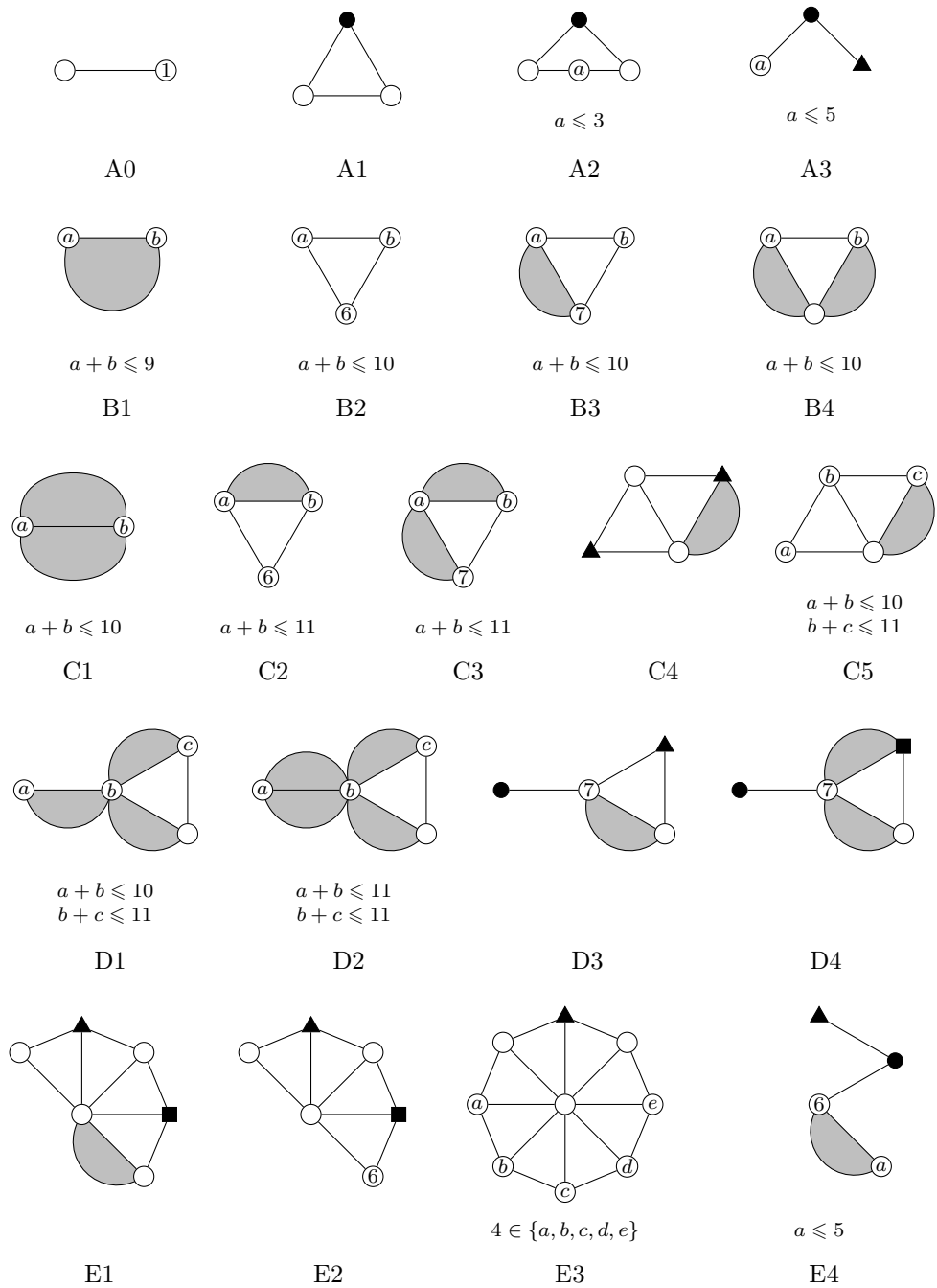
%
\centering
\hspace*{\fill}
\subfloat[A0]{%
  \begin{tikzgraph}
    \draw (-\unit,0) node[vertex] (a0) {};
    \draw (\unit,0) node[vertex] (a1) {1};
    \draw (a0) edge (a1);
    \useasboundingbox (-1.5*\unit,-1.5*\unit) rectangle (1.5*\unit,1.5*\unit);
  \end{tikzgraph}
  }
\hspace*{\fill}
\subfloat[A1]{%
  \begin{tikzgraph}
    \useasboundingbox (-1.5*\unit,-1.5*\unit) rectangle (1.5*\unit,1.5*\unit);
    \draw (90:\unit) node[2-vertex] (a0) {};
    \foreach\i in {1,...,2}
    {
      \draw (90+120*\i:\unit) node[vertex] (a\i) {};
    }
    \draw (a0) edge (a1) (a1) edge (a2) (a2) edge (a0);
  \end{tikzgraph}
}
\hspace*{\fill}
\subfloat[A2]{%
  \begin{tikzgraph}
    \useasboundingbox (-1.5*\unit,-1.5*\unit) rectangle (1.5*\unit,1.5*\unit);
    \draw (0:\unit) node[vertex] (a0) {};
    \draw (90:\unit) node[2-vertex] (a1) {};
    \draw (180:\unit) node[vertex] (a2) {};
    \draw (0:0) node[vertex] (a3) {$a$};
    \draw (a0) edge (a1) (a1) edge (a2) (a2) edge (a3) (a3) edge (a0);
    \draw (-90:\unit) node {$a\le3$};
  \end{tikzgraph}
}
\hspace*{\fill}
\subfloat[A3]{%
  \begin{tikzgraph}
    \useasboundingbox (-1.5*\unit,-1.6*\unit) rectangle (1.5*\unit,1.4*\unit);
    \draw (180:\unit) node[vertex] (a0) {$a$};
    \draw (90:\unit) node[2-vertex] (a1) {};
    \draw (0:\unit) node[3-vertex] (a2) {};
    \draw (a0) edge (a1);
    \draw (a2) edge (a1);
    \draw (-90:\unit) node {$a\le5$};
  \end{tikzgraph}
}
\hspace*{\fill}
\\
\vspace{0.25cm}
\hspace*{\fill}
\subfloat[B1]{%
  \begin{tikzgraph}
    \useasboundingbox (-1.6*\unit,-2.4*\unit) rectangle (1.6*\unit,1.1*\unit);
    \draw[fill=gray!50] (30:\unit) .. controls (-45:2*\unit) and (-135:2*\unit) .. (150:\unit);
    \draw (30:\unit) node[vertex] (a0) {$b$};
    \draw (150:\unit) node[vertex] (a1) {$a$};
    \draw (a0) edge (a1);
    \draw (-90:2*\unit) node {$a+b\le9$};
  \end{tikzgraph}
}
\hspace*{\fill}
\subfloat[B2]{%
  \begin{tikzgraph}
    \useasboundingbox (-1.6*\unit,-2.4*\unit) rectangle (1.6*\unit,1.1*\unit);
    \draw (30:\unit) node[vertex] (a0) {$b$};
    \draw (150:\unit) node[vertex] (a1) {$a$};
    \draw (-90:\unit) node[vertex] (a2) {$6$};
    \draw (a0) edge (a1) (a1) edge (a2) (a2) edge (a0);
    \draw (-90:2*\unit) node {$a+b\le10$};
  \end{tikzgraph}
}
\hspace*{\fill}
\subfloat[B3]{%
  \begin{tikzgraph}
    \useasboundingbox (-1.6*\unit,-2.4*\unit) rectangle (1.6*\unit,1.1*\unit);
    \draw[shift={(270:\unit)},rotate=120,fill=gray!50] \smallface;
    \draw (30:\unit) node[vertex] (a0) {$b$};
    \draw (150:\unit) node[vertex] (a1) {$a$};
    \draw (-90:\unit) node[vertex] (a2) {$7$};
    \draw (a0) edge (a1) (a1) edge (a2) (a2) edge (a0);
    \draw (-90:2*\unit) node {$a+b\le10$};
  \end{tikzgraph}
}
\hspace*{\fill}
\subfloat[B4]{%
  \begin{tikzgraph}
    \useasboundingbox (-1.6*\unit,-2.4*\unit) rectangle (1.6*\unit,1.1*\unit);
    \draw[shift={(270:\unit)},rotate=120,fill=gray!50] \smallface;
    \draw[shift={(30:\unit)},rotate=240,fill=gray!50] \smallface;
    \draw (30:\unit) node[vertex] (a0) {$b$};
    \draw (150:\unit) node[vertex] (a1) {$a$};
    \draw (-90:\unit) node[vertex] (a2) {};
    \draw (a0) edge (a1) (a1) edge (a2) (a2) edge (a0);
    \draw (-90:2*\unit) node {$a+b\le10$};
  \end{tikzgraph}
}
\hspace*{\fill}
\\
\vspace{0.25cm}
\hspace*{\fill}
\subfloat[C1]{%
  \begin{tikzgraph}
    \useasboundingbox (-1.6*\unit,-2.4*\unit) rectangle (1.6*\unit,1.6*\unit);
    \draw[fill=gray!50] (0:\unit) .. controls (45:2*\unit) and (135:2*\unit) .. (180:\unit);
    \draw[fill=gray!50] (0:\unit) .. controls (-45:2*\unit) and (-135:2*\unit) .. (180:\unit);
    \draw (0:\unit) node[vertex] (a0) {$b$};
    \draw (180:\unit) node[vertex] (a1) {$a$};
    \draw (a0) edge (a1);
    \draw (-90:2*\unit) node {$a+b\le10$};
  \end{tikzgraph}
}
\hspace*{\fill}
\subfloat[C2]{%
  \begin{tikzgraph}
    \useasboundingbox (-1.6*\unit,-2.4*\unit) rectangle (1.6*\unit,1.6*\unit);
    \draw[shift={(150:\unit)},fill=gray!50] \smallface;
    \draw (30:\unit) node[vertex] (a0) {$b$};
    \draw (150:\unit) node[vertex] (a1) {$a$};
    \draw (-90:\unit) node[vertex] (a2) {$6$};
    \draw (a0) edge (a1) (a1) edge (a2) (a2) edge (a0);
    \draw (-90:2*\unit) node {$a+b\le11$};
  \end{tikzgraph}
}
\hspace*{\fill}
\subfloat[C3]{%
  \begin{tikzgraph}
    \useasboundingbox (-1.6*\unit,-2.4*\unit) rectangle (1.6*\unit,1.6*\unit);
    \draw[shift={(150:\unit)},fill=gray!50] \smallface;
    \draw[shift={(270:\unit)},rotate=120,fill=gray!50] \smallface;
    \draw (30:\unit) node[vertex] (a0) {$b$};
    \draw (150:\unit) node[vertex] (a1) {$a$};
    \draw (-90:\unit) node[vertex] (a2) {$7$};
    \draw (a0) edge (a1) (a1) edge (a2) (a2) edge (a0);
    \draw (-90:2*\unit) node {$a+b\le11$};
  \end{tikzgraph}
}
\hspace*{\fill}
\subfloat[C4]{%
  \begin{tikzgraph}
    \useasboundingbox (-2*\unit,-2.9*\unit) rectangle (1.2*\unit,1.1*\unit);
    \draw[shift={(30:\unit)},rotate=-120,fill=gray!50] \smallface;
    \draw (30:\unit) node[3-vertex] (a0) {};
    \draw (150:\unit) node[vertex] (a1) {};
    \draw (-90:\unit) node[vertex] (a2) {};
    \draw (210:2*\unit) node[3-vertex] (a3) {};
    \draw (a0) edge (a1) (a1) edge (a2) (a2) edge (a0) (a1) edge (a3) (a2) edge (a3);
  \end{tikzgraph}
}
\hspace*{\fill}
\subfloat[C5]{%
  \begin{tikzgraph}
    \useasboundingbox (-2*\unit,-2.9*\unit) rectangle (1.2*\unit,1.1*\unit);
    \draw[shift={(30:\unit)},rotate=-120,fill=gray!50] \smallface;
    \draw (30:\unit) node[vertex] (a0) {$c$};
    \draw (150:\unit) node[vertex] (a1) {$b$};
    \draw (-90:\unit) node[vertex] (a2) {};
    \draw (210:2*\unit) node[vertex] (a3) {$a$};
    \draw (a0) edge (a1) (a1) edge (a2) (a2) edge (a0) (a1) edge (a3) (a2) edge (a3);
    \draw (-0.375*\unit,-2.25*\unit) node {
    $\begin{array}{c}
      a+b\le10\\
      b+c\le11
    \end{array}$
    };
  \end{tikzgraph}
}
\hspace*{\fill}
\\
\vspace{0.25cm}
\hspace*{\fill}
\subfloat[D1]{%
  \begin{tikzgraph}
    \useasboundingbox (-1.95*\unit,-2.9*\unit) rectangle (1.7*\unit,1.6*\unit);
    \foreach \flip/\angle in {1/30,1/180,-1/30}
      \draw[yscale=\flip,rotate=\angle,fill=gray!50] \smallface;
    \draw (30:1.732*\unit) node[vertex] (a0) {$c$};
    \draw (0,0) node[vertex] (a1) {$b$};
    \draw (-30:1.732*\unit) node[vertex] (a2) {};
    \draw (-1.732*\unit,0) node[vertex] (a3) {$a$};
    \draw (a0) edge (a1) (a1) edge (a2) (a2) edge (a0) (a1) edge (a3);
    \draw (-0.125*\unit,-2.25*\unit) node {
    $\begin{array}{c}
      a+b\le10\\
      b+c\le11
    \end{array}$
    };
  \end{tikzgraph}
}
\hspace*{\fill}
\subfloat[D2]{%
  \begin{tikzgraph}
    \useasboundingbox (-1.95*\unit,-2.9*\unit) rectangle (1.7*\unit,1.6*\unit);
    \foreach \flip/\angle in {1/30,1/180,-1/30,-1/180}
      \draw[yscale=\flip,rotate=\angle,fill=gray!50] \smallface;
    \draw (30:1.732*\unit) node[vertex] (a0) {$c$};
    \draw (0,0) node[vertex] (a1) {$b$};
    \draw (-30:1.732*\unit) node[vertex] (a2) {};
    \draw (-1.732*\unit,0) node[vertex] (a3) {$a$};
    \draw (a0) edge (a1) (a1) edge (a2) (a2) edge (a0) (a1) edge (a3);
    \draw (-0.125*\unit,-2.25*\unit) node {
    $\begin{array}{c} 
      a+b\le11\\
      b+c\le11
    \end{array}$
    };
  \end{tikzgraph}
}
\hspace*{\fill}
\subfloat[D3]{%
  \begin{tikzgraph}
    \useasboundingbox (-1.95*\unit,-2.9*\unit) rectangle (1.7*\unit,1.6*\unit);
    \draw[yscale=-1,rotate=30,fill=gray!50] \smallface;
    \draw (30:1.732*\unit) node[3-vertex] (a0) {};
    \draw (0,0) node[vertex] (a1) {$7$};
    \draw (-30:1.732*\unit) node[vertex] (a2) {};
    \draw (-1.732*\unit,0) node[2-vertex] (a3) {};
    \draw (a0) edge (a1) (a1) edge (a2) (a2) edge (a0) (a1) edge (a3);
  \end{tikzgraph}
}
\hspace*{\fill}
\subfloat[D4]{%
  \begin{tikzgraph}
    \useasboundingbox (-1.95*\unit,-2.9*\unit) rectangle (1.7*\unit,1.6*\unit);
    \foreach \flip/\angle in {1/30,-1/30}
      \draw[yscale=\flip,rotate=\angle,fill=gray!50] \smallface;
    \draw (30:1.732*\unit) node[4-vertex] (a0) {};
    \draw (0,0) node[vertex] (a1) {$7$};
    \draw (-30:1.732*\unit) node[vertex] (a2) {};
    \draw (-1.732*\unit,0) node[2-vertex] (a3) {};
    \draw (a0) edge (a1) (a1) edge (a2) (a2) edge (a0) (a1) edge (a3);
  \end{tikzgraph}
}
\hspace*{\fill}
\\
\vspace{0.25cm}
\hspace*{\fill}
\subfloat[E1]{%
  \begin{tikzgraph}
    \useasboundingbox (-1.6*\unit,-2.7*\unit) rectangle (2.1*\unit,2.1*\unit);
    \draw[yscale=-1,rotate=45,fill=gray!50] \smallface;
    \draw (0,0) node[vertex] (a0) {};
    \draw (135:1.732*\unit) node[vertex] (a1) {};
    \draw (90:1.732*\unit) node[3-vertex] (a2) {};
    \draw (45:1.732*\unit) node[vertex] (a3) {};
    \draw (0:1.732*\unit) node[4-vertex] (a4) {};
    \draw (-45:1.732*\unit) node[vertex] (a5) {};
    \foreach \i in {1,...,5}
      \draw (a0) edge (a\i);
    \foreach \i/\j in {1/2,2/3,3/4,4/5}
      \draw (a\i) edge (a\j);
  \end{tikzgraph}
}
\hspace*{\fill}
\subfloat[E2]{%
  \begin{tikzgraph}
    \useasboundingbox (-1.6*\unit,-2.7*\unit) rectangle (2.1*\unit,2.1*\unit);
    \draw (0,0) node[vertex] (a0) {};
    \draw (135:1.732*\unit) node[vertex] (a1) {};
    \draw (90:1.732*\unit) node[3-vertex] (a2) {};
    \draw (45:1.732*\unit) node[vertex] (a3) {};
    \draw (0:1.732*\unit) node[4-vertex] (a4) {};
    \draw (-45:1.732*\unit) node[vertex] (a5) {$6$};
    \foreach \i in {1,...,5}
      \draw (a0) edge (a\i);
    \foreach \i/\j in {1/2,2/3,3/4,4/5}
      \draw (a\i) edge (a\j);
  \end{tikzgraph}
}
\hspace*{\fill}
\subfloat[E3]{%
  \begin{tikzgraph}
    \useasboundingbox (-2.1*\unit,-2.9*\unit) rectangle (2.1*\unit,2.1*\unit);
    \draw (0,0) node[vertex] (a0) {};
    \draw (180:1.732*\unit) node[vertex] (a1) {$a$};
    \draw (225:1.732*\unit) node[vertex] (a2) {$b$};
    \draw (270:1.732*\unit) node[vertex] (a3) {$c$};
    \draw (315:1.732*\unit) node[vertex] (a4) {$d$};
    \draw (0:1.732*\unit) node[vertex] (a5) {$e$};
    \draw (45:1.732*\unit) node[vertex] (a6) {};
    \draw (90:1.732*\unit) node[3-vertex] (a7) {};
    \draw (135:1.732*\unit) node[vertex] (a8) {};
    \foreach \i in {1,...,8}
      \draw (a0) edge (a\i);
    \foreach \i/\j in {1/2,2/3,3/4,4/5,5/6,6/7,7/8,8/1}
      \draw (a\i) edge (a\j);
    \draw (0,-2.5*\unit) node {$4 \in \{a,b,c,d,e\}$};
  \end{tikzgraph}
}
\hspace*{\fill}
\subfloat[E4]{%
  \begin{tikzgraph}
    \useasboundingbox (-2.1*\unit,-2.9*\unit) rectangle (2.1*\unit,2.1*\unit);
    \draw[yscale=-1,rotate=45,fill=gray!50] \smallface;
    \draw (0,0) node[vertex] (a0) {$6$};
    \draw (90:1.732*\unit) node[3-vertex] (a3) {};
    \draw (30:1.732*\unit) node[2-vertex] (a2) {};
    \draw (-45:1.732*\unit) node[vertex] (a1) {$a$};
    \foreach \i in {1,2}
      \draw (a0) edge (a\i);
    \draw (a2) edge (a3);
    \draw (0,-2.5*\unit) node {$a\le5$};
  \end{tikzgraph}
}
\hspace*{\fill}
\caption{The reducible configurations.}
\label{fig:configs}
\end{figure}

%!TEX root = kss10.tex

\section{Discharging rules}
\label{sec:disdesc}

Recall that $G=(V,E,F)$ is a plane graph that is a minimum counter-example to the
statement of Theorem~\ref{thm:main}, in the sense that $\abs{E}$ is
minimum. (In particular, a planar embedding of $G$ is fixed.)
We obtain a contradiction by using the Discharging
Method.
Each vertex and face of $G$ is assigned an initial charge;
the total sum of the charge is negative by Euler's Formula.
Then vertices and faces send or receive charge according
to certain redistribution rules. The total sum of the charge remains
unchanged, but ultimately (by using all of the reducible configurations in Section~\ref{sec:reddesc}) we deduce
that the charge of each face and vertex is non-negative, a contradiction.

\subsection{Initial charge}
We assign a charge to each vertex and face.
For every vertex $v\in V$, we define the initial charge
$\ch(v)$ to be $2\cdot\deg(v) - 6$, while for every face $f\in F$,
we define the initial charge $\ch(f)$ to be $\deg(f)-6$.
The total sum is
\[ \sum_{v\in V} \ch(v) + \sum_{f\in F} \ch(f) = -12\,. \]
Indeed, by Euler's formula $\abs{E}-\abs{V}-\abs{F}=-2$. Thus, $6\abs{E}-6\abs{V}-6\abs{F}=-12$.
Since $\sum_{v\in V}\deg(v)=2\abs{E}=\sum_{f\in F}\deg(f)$, it follows that
\begin{align*}
-12&=4\cdot\abs{E}-6\cdot\abs{V}+\sum_{f\in F}(\deg(f)-6)\\
&=\sum_{v\in V}(2\deg(v)-6)+\sum_{f\in F}(\deg(f)-6)\,.
\end{align*}

\subsection{Rules}
We need the following definitions to state the discharging rules. Given an $(\sge7)$-vertex $v$,
a face is \emph{special} (for $v$) if it is
an $(\sge5)$-face that is incident to a degree $2$ neighbour of $v$ (and so, in
particular, such a face is incident to $v$). Given a $6$-vertex $v$, a face $f$ is \emph{exceptional} (for $v$)
if $f$ is a $6$-face $vv_1v_2\ldots v_5$ where $v_1$ is a $2$-vertex and $v_2$ is a $3$-vertex.

Since $G$ may have cut-vertices (of a type not forbidden by
Lemma~\ref{cutvertex}), some vertices may be incident to
the same face several times. Thus, in the rules below, when we say that
a vertex or a face sends charge to an incident face or vertex, we mean
that the charge is sent as many times as these elements are incident to
each other.

The following describe how the charge is redistributed among the edges and faces in $G$.

\begin{itemize}
\item[\textbf{R0}]  An $(\sge4)$-face sends $1$ to each incident $2$-vertex.

\item[\textbf{R1}]  An $(\sge7)$-vertex sends
\begin{itemize}
\item[\textbf{R1a}]  $3/2$ to incident $(3,\sge7,\sge7)$-triangles and $(4,6,\sge7)$-triangles;

\item[\textbf{R1b}]  $7/5$ to incident $(5,5,\sge 7)$-triangles;

\item[\textbf{R1c}]  $5/4$ to incident $(4,\sge7,\sge7)$-triangles and $(2,8,4,8)$-faces;

\item[\textbf{R1d}]  $6/5$ to incident $(5,6,8)$-triangles;

\item[\textbf{R1e}]  $11/10$ to incident $(5,6,7)$- and $(5,\sge7,\sge7)$-triangles, and incident $(2,8,5,8)$-faces;

\item[\textbf{R1f}]  $1$ to incident $(\sge6,\sge6,\sge6)$-triangles, to every incident $4$-face that is not a $(2,8,\sle5,8)$-face, and to each of its incident special faces;

\item[\textbf{R1g}]  $1/2$ to each of its incident non-special $5$-faces.
\end{itemize}

\item[\textbf{R2}]  A $6$-vertex sends
\begin{itemize}
\item[\textbf{R2a}]  $11/10$ to incident $(5,6,6)$- and $(5,6,7)$-triangles;

\item[\textbf{R2b}]  $1$ to every other incident triangle, to each incident $4$-face, and to each of its incident exceptional faces;

\item[\textbf{R2c}]  $1/2$ to each of its incident $5$-faces and to each of its incident unexceptional $6$-faces.
\end{itemize}

\item[\textbf{R3}]  A $5$-vertex sends $4/5$ to each incident face.

\item[\textbf{R4}]  A $4$-vertex sends $1/2$ to each incident face.
\end{itemize}

\noindent
\emph{Note on rules R1 and R2.}  Since configurations A1, B1 and B2 are reducible, it follows from rule R1 that an $(\sge7)$-vertex sends positive charge to every incident triangle.
We conclude that an $(\sge7)$-vertex sends zero charge only to incident
$(\sge6)$-faces that are not special.
Similarly, a $6$-vertex sends zero charge only to incident $(\sge7)$-faces.

%!TEX root = kss10.tex

\section{Proof of Theorem~\ref{thm:main}}
\label{sec:disproof}

In this section, we prove that the final charge $\ch^*(x)$ of every
$x\in V\cup F$ is non-negative. Hence, we obtain
\[
-12=\sum_{x\in V\cup F}\ch(x)=\sum_{x\in V\cup F}\ch^*(x)\ge0,
\]
a contradiction. This contradiction establishes Theorem~\ref{thm:main}.

\subsection{Final charge of faces}

Let $f$ be a $d$-face. Our goal is to show that $\ch^*(f)\ge0$. Recall that the initial charge of $f$ is $\ch(f)=d-6$.

First suppose that $d\ge6$.  Let $p$ be the number of occurrences of an $(\sge7)$-vertex having $f$ as an incident special face, and $q$ the number of $2$-vertices incident to
$f$. In particular, $\ch^*(f)\ge d-6-q+p$ by rules R0 and R1f.
We define $x$ to be the number of edges of $f$ between a $2$-vertex and
an $(\sle6)$-vertex, $y$ the number of edges of $f$ between a $2$-vertex and
an $(\sge7)$-vertex,
and $z$ the number of edges of $f$ between two $2$-vertices. We have $2q=x+y+z$ and $2p\ge y$.
If $x=0$, then $p\ge q$,
and hence $\ch^*(f)\ge0$. Assume now that $x\ge1$.
Then Lemma~\ref{lem-new} implies that $2d-q-x\ge9$, that is $d-x/2\ge(q+9)/2$. Now, $\ch^*(f)\ge d-6-q+p\ge \lceil d-6-(x+z)/2 \rceil$
since $p\ge y/2$, $q=(x+y+z)/2$ and $d-6-q+p$ is integral. Hence,
\[
\ch^*(f)\ge\left\lceil\frac{q-3-z}{2}\right\rceil,
\]
which is non-negative if $q-z\ge2$. It remains to deal with the case where $q-z\le 1$.  Note that $q\ge 2z$ due to Lemma~\ref{lem-newnew}.  It therefore follows that $z\le 1$.  Let us first consider the case $z=1$, hence $q=2$.  In this case, it follows by Lemma~\ref{lem-newnew} that $p\ge2$ and therefore $\ch^*(f)\ge d-6-2+2\ge0$.  We just need to check the case $z=0$ and $q=1$ (since $x\ge1$). Then we may assume that $d=6$, for if
$d\ge7$ then $\ch^*(f)\ge d-6-1\ge0$. Moreover, if $y\ge1$ then $\ch^*(f)\ge0$ by rule R1f. Hence, $x=2q-y-z=2$. Let $v$ and $v'$
be the two $(\sle6)$-neighbours of the $2$-vertex of $f$. First, if both $v$ and $v'$ have degree more than $3$, then each of them
sends at least $1/2$ to $f$ by rules R2, R3 and R4, and hence
$\ch^*(f)\ge0$. So, as $q=1$, we may assume that $v$ has degree $3$. Now, by the reducibility of
configuration A3, the degree of $v'$ is at least $6$ and hence exactly $6$.
Consequently, $f$ is exceptional for $v'$ and $f$ receives $1$ from $v'$ by rule R2b, which concludes the analysis for $(\sge6)$-faces.

Suppose that $d=5$, and let $q$ be the number of $2$-vertices incident to $f$.
Lemma~\ref{lem-new} implies that $f$ is incident to at most one loose edge.
Thus, if $q=0$ (so that $f$ sends no charge to vertices) then $f$ is incident to at least two $(\sge4)$-vertices,
and hence $\ch^*(f)\ge5-6+2\cdot1/2=0$ by rules R1g, R2c, R3 and R4.
Moreover, if $q\ge1$, then Lemma~\ref{lem-new} implies that $f$ is not incident to a loose edge and $f$ is thus incident to at least $q+1$ vertices of degree at least $7$.
herefore, $f$ receives at least $q+1$ from its incident $(\sge7)$-vertices by rule R1f, and so $\ch^*(f)\ge5-6-q+q+1=0$.

Next suppose that $d = 4$.  Let the four vertices incident to $f$ be
$v_0,\ldots,v_3$ in clockwise order and suppose without loss of generality that $v_0$ has the least degree among $v_0,\ldots,v_3$.
First, if $\deg(v_0)\ge4$, then by rules R1f, R2b, R3 and R4, the charge sent to $f$ by each incident vertex is at least $1/2$,
so that $\ch^*(f) \ge -2+4\cdot1/2=0$.
If $\deg(v_0) = 3$,
then since configuration B1 is reducible $\deg(v_1) \ge 7$ and $\deg(v_3)\ge
7$. Thus, by rule R1f, $\ch^*(f) \ge -2 + 2 = 0$.
Last, assume that $\deg(v_0) = 2$. Since configuration B1 is
reducible, $\deg(v_1)=\deg(v_3)=8$, and since configuration A2 is
reducible, $\deg(v_2) \ge 4$. By rule R0, $f$ sends charge $1$ to $v_0$.
But $f$ receives charge $3$: by rules R1c and R4 if $f$ is a $(2,8,4,8)$-face;
by rules R1e and R3 if $f$ is a $(2,8,5,8)$-face; and by
rules R1f and R2b if $f$ is a $(2,8,\sge6,8)$-face. Thus, $\ch^*(f)\ge0$.

Finally suppose that $d = 3$.  Let the three vertices incident to $f$ be $v_0$, $v_1$ and $v_2$, and let us assume without loss of generality that $\deg(v_0)\le\deg(v_1)\le\deg(v_2)$.
Since configuration A1 is reducible, $\deg(v_0)\ge3$.  Thus $f$ sends no
charge, but needs to make up for an initial charge of $-3$.  We analyse
several cases according to the value of $\deg(v_0)$.
\begin{description}
\item{$\deg(v_0) = 3$.}  Since configuration B1 is reducible, $\deg(v_1) \ge 7$.  By
rule R1a, $f$ receives charge $2\cdot3/2 =3$.

\item{$\deg(v_0) = 4$.}  Since configuration B1 is reducible, $\deg(v_1) \ge 6$.
If $\deg(v_1)\ge7$, then $f$ receives charge $2\cdot5/4+1/2=3$ by rules
R1c and R4. Otherwise, $\deg(v_1) = 6$ and hence $\deg(v_2) \ge 7$ since configuration B2 is reducible, but
then $f$ receives charge $3/2 + 1 + 1/2 = 3$ by rules R1a, R2b and R4.

\item{$\deg(v_0) = 5$.}  If $\deg(v_1) = 5$, then $\deg(v_2) \ge 7$ since configuration B2
is reducible, but then $f$ receives charge $7/5+2\cdot4/5=3$ by rules
R1b and R3.  If $\deg(v_1) = 6$, then we separately consider the cases of $\deg(v_2) \in
\{6,7,8\}$.  If $\deg(v_2) \in \{6,7\}$, then $f$ receives charge
$2\cdot11/10+4/5=3$ by rules R1e, R2a and R3; if $\deg(v_2) = 8$, then $f$
receives charge $6/5+1+4/5=3$ by rules R1d, R2b and R3. Last, if
$\deg(v_1)\ge7$, then $f$ receives charge $2\cdot11/10+4/5=3$ by rules R1e and
R3.

\item{$\deg(v_0) \ge 6$.} The face $f$ receives charge at least $3$ by rules R1f and R2b.
\end{description}
This concludes our analysis of the final charge of $f$, verifying that $\ch^*(f) \ge 0$.

%!TEX root = kss10.tex

\subsection{Final charge of $(\sle6)$-vertices}

Let $v$ be an arbitrary vertex of $G$. Our goal is to show that $\ch^*(v)\ge0$.  Recall that the initial charge  of $v$ is $\ch(v)=2\cdot\deg(v)-6$. Moreover, $\deg(v)\ge2$ since configuration A0 is reducible.

If $\deg(v)=2$, then $v$ is incident to two $(\sge4)$-faces since configuration A1 is reducible; thus, $v$ receives charge $1$ from both incident faces by rule R0 and the final charge of $v$ is $\ch^*(v) = -2 + 2 = 0$.

If $\deg(v)=3$, then $v$ neither sends nor receives any charge; hence, the final charge of $v$ is $\ch^*(v)=\ch(v)=0$.

If $\deg(v) \in \{4,5\}$, then $v$ sends charge $\ch(v)/\deg(v)$ to each incident face by rules R3 and R4; the final charge of $v$ is $\ch^*(v) =0$.

Suppose now that $\deg(v) = 6$.  The initial charge of $v$ is $\ch(v) =
6$. 
If $v$ is incident to a $5$-face or an unexceptional $(\sge6)$-face, then it sends charge at most $1/2$ to one of the faces by rule R2c and so by rule R2 the total charge sent by $v$ is at most $5\cdot 11/10 +1/2 = 6$.
If $v$ is incident to an exceptional $6$-face, then, since configuration E4 is reducible, $v$ has no incident $(5,6,6)$- or $(5,6,7)$-triangles and thus the total charge sent is at most $5\cdot1+1=6$ by rules R2b and R2c.
We conclude that $v$ is only incident to $(\sle4)$-faces. Then,
since configuration C2 is reducible, $v$ has no incident
$(5,6,6)$-face; furthermore, since configuration C3 is reducible,
$v$ has no incident $(5,6,7)$-face. Therefore, the charge sent by $v$
is at most $6$ and the final charge of $v$ satisfies $\ch^*(v) \ge 0$.

\subsection{Final charge of $7$-vertices}

Next, suppose that $\deg(v) = 7$.
For convenience, let $v_0,v_1,\ldots,v_6$ be the neighbours of $v$ in clockwise order, and let $f_i$ be the face $vv_iv_{i+1}$ for $i\in\{0,1,\ldots,6\}$, where the index is modulo $7$.
The initial charge of $v$ is $\ch(v) = 8$.
We partition our analysis based on the number of incident special $(\sge5)$-faces.  Note that since configuration B1 is reducible, if $v$ is adjacent to a $2$-vertex then both of the $2$-vertex's incident faces are special for $v$.

\subsubsection{There is an adjacent $2$-vertex}\label{subsub:2vertex}

We first treat the cases in which $v$ is adjacent to some $2$-vertex. 
In these cases, there are at least two incident special $(\sge5)$-faces.  Thus, we may assume that $v$ is incident to at most one non-special $(\sge5)$-face (which is sent at most $1/2$ charge by rule R1g), for otherwise the total charge sent by $v$ is at most $3\cdot3/2+2+2\cdot1/2<8$.
Now note that, by rules R1a and R1b, any face that is sent charge more than $5/4$ must be a $(3,7,\sge7)$-, $(4,6,7)$- or $(5,5,7)$-triangle.
And so we assert that if $f_i$ is such a triangle, then both $f_{i-1}$ and $f_{i+1}$ are $(\sge5)$-faces.
The assertion holds if $f_i$ is a $(3,7,\sge7)$-triangle since configurations C1 and D3 are reducible, and the fact that configuration B3 is reducible implies the assertion for the two other cases.

\paragraph{Case \ref{subsub:2vertex}(1).}
If $v$ is incident to (exactly) one non-special $(\sge5)$-face, then $v$ is incident to only two special $(\sge5)$-faces (for otherwise the total charge sent by $v$ is $3\cdot3/2+3+1/2=8$). Thus, the remaining four incident faces are $(\sle4)$-faces. 
Observe that there are at least three of these faces that are adjacent
around $v$ to another $(\sle4)$-face.
Hence
by the assertion at the end of the last paragraph, each of these three
faces is sent at most $5/4$ charge.
So the charge sent by $v$ is at most $3/2+3\cdot5/4+2+1/2<8$.
Thus, $v$ is not incident to a non-special $(\sge5)$-face.

\paragraph{Case \ref{subsub:2vertex}(2).}
If $v$ is incident to at least five $(\sge5)$-faces, then the
charge sent is at most $2\cdot 3/2+5 = 8$ due to rule R1f.

\paragraph{Case \ref{subsub:2vertex}(3).}
If $v$ is incident to exactly four $(\sge5)$-faces, all of which are
special, then there must be two incident $(\sle4)$-faces that are
adjacent. (Recall that each special face is adjacent to another special
face.) By the assertion in the second paragraph of the $7$-vertex
analysis, both of these are sent charge at most $5/4$.  Therefore, the
total charge sent by $v$ in this case is at most $3/2+2\cdot5/4+4=8$.

\paragraph{Case \ref{subsub:2vertex}(4).}
If $v$ is incident to exactly three $(\sge5)$-faces, all of which are special, then these faces are sequentially adjacent around $v$.  Hence, by the assertion in the second paragraph of the $7$-vertex analysis, no face is sent charge more than $5/4$ and the total charge sent is at most $4\cdot5/4+3=8$

\paragraph{Case \ref{subsub:2vertex}(5).}
Suppose that $v$ is incident to exactly two
$(\sge5)$-faces, say $f_0$ and $f_1$, both special (so $v_1$ is a $2$-vertex).
Recall that all other incident faces have size at most $4$.
Let us analyse which incident faces can be sent
charge $5/4$. By rule R1c, such a face must be a $(4,7,\sge7)$-triangle.
Since configuration D4 is reducible, such a face must be adjacent to
a special face for $v$. Thus, there are at most two such
faces, namely $f_2$ and $f_6$. Consequently,
the total charge sent by $v$ is at most
$2\cdot5/4+3\cdot11/10+2<8$ by rules R1c, R1e and R1f.

\subsubsection{There is no adjacent $2$-vertex}\label{subsub:no2vertex}

Now we may assume that $v$ is not adjacent to a $2$-vertex.
In these cases, we may assume that $v$ is incident to at most two (non-special) $(\sge5)$-faces (which are sent at most $1/2$ charge by rule R1g), for otherwise the total charge sent by $v$ is at most $4\cdot3/2+3\cdot1/2<8$.

\paragraph{Case \ref{subsub:no2vertex}(1).}
Suppose that $v$ is incident to two $5$-faces (and hence to no other $(\sge5)$-faces).  First suppose that there are four $(\sle4)$-faces that are sequentially adjacent around $v$.  Since configuration B3 is reducible, none of these is a $(3,7,7)$-, $(4,6,7)$- or $(5,5,7)$-triangle. Also, since configuration C1 is reducible, at most two of these are $(3,7,8)$-triangles.  It follows that at most three faces are sent charge $3/2$, and the remaining two $(\sle4)$-faces are sent charge at most $5/4$; so the total charge sent by $v$ is at most $3\cdot3/2+2\cdot5/4+2\cdot1/2=8$. Next suppose that there are only three $(\sle4)$-faces that are sequentially adjacent around $v$. As before, we deduce that none of these is a $(3,7,7)$-, $(4,6,7)$- or $(5,5,7)$-triangle and at most two of these are $(3,7,8)$-triangles. If two of these faces are $(3,7,8)$-triangles, then the middle one is either a $(7,8,8)$-triangle or a $4$-face, and hence sent charge $1$ by rule R1f. Hence, the total charge sent by $v$ is at most $\max\{4\cdot3/2+1+2\cdot1/2,3\cdot3/2+2\cdot5/4+2\cdot1/2\}=8$.

\paragraph{Case \ref{subsub:no2vertex}(2).}
Suppose that $v$ is incident to exactly one $5$-face, say it is $f_0$ without loss of generality. Then $v$ cannot be adjacent to an
$(\sge6)$-face, for otherwise the total charge sent by $v$ is at most
$5\cdot3/2+1/2=8$.  As above, since configurations B3 and C1 are
reducible, none of the remaining faces (all $(\sle4)$-faces) is a
$(3,7,7)$-, $(4,6,7)$- or $(5,5,7)$-triangle and at most two of them are
$(3,7,8)$-triangles (either $f_1$ or $f_6$).  Indeed, one of $f_1$ and
$f_6$, must be a $(3,7,8)$-triangle, for otherwise the charge sent by
$v$ is at most $6\cdot5/4+1/2=8$. Assume without loss of generality that
$v_0$ has degree $3$. Therefore, since configuration D1 is
reducible, for each $i\in\{2,3,4,5\}$, either $f_i$ is a $4$-face or both $v_i$ and $v_{i+1}$ have degree at least $5$.
It follows that each of $f_2$, $f_3$, $f_4$ and $f_5$ is sent charge at most $11/10$;
thus, $v$ sends total charge at most $2\cdot3/2+4\cdot11/10+1/2<8$.

\paragraph{Case \ref{subsub:no2vertex}(3).}
It cannot be that $v$ is incident to two $(\sge6)$-faces, for then the total charge sent by $v$ would be at most $5\cdot3/2 < 8$. The case in which $v$ is incident to exactly one $(\sge6)$-face is handled by an argument identical to the one used in the previous paragraph.

\paragraph{Case \ref{subsub:no2vertex}(4).}
If $v$ is incident only to $(\sle4)$-faces, then, since configurations B3 and C1 are reducible, $v$ is not incident to a $(3,7,\sge7)$-, $(4,6,7)$- or $(5,5,7)$-triangle.
If $v$ is incident to a $(4,7,\sge7)$-triangle, then, since configuration D2 is reducible, $v$ cannot be adjacent to any other $4$-vertex.
It therefore follows that the total charge sent by $v$ in this case is at most $2\cdot 5/4+5\cdot11/10=8$.

\medskip
This concludes the analysis of the final charge of the $7$-vertices.

\subsection{Final charge of $8$-vertices}

Last, suppose that $\deg(v) = 8$.
For convenience, let $v_0,\ldots,v_7$ be the neighbours of $v$ in clockwise order, and for $i\in\{0,\ldots,7\}$, let $f_i$ be the face of $G$ incident with $vv_i$ and $vv_{i+1}$, where the index is modulo $8$.
The initial charge of $v$ is $\ch(v)
= 10$. We partition our analysis based on the number of incident
special faces. Note that since configuration C1 is
reducible, if $v$ is adjacent to a $2$-vertex then at least one of the
$2$-vertex's incident faces is special for $v$.
Furthermore, since configurations A2 and B1 are reducible, if one of
the $2$-vertex's incident faces is an $(\sle4)$-face, then it must be a
$(2,8,\sge4,8)$-face.

\subsubsection{There is an incident special face}\label{subsub:special}

We start with the cases in which there is a face that is special for $v$.
In these cases, we may assume that there is at most one incident non-special $(\sge5)$-face (that is sent charge at most $1/2$ by rule R1g), for otherwise the total charge sent by $v$ is at most $5\cdot3/2+1+2\cdot1/2<10$.

Suppose that $v$ is incident to (exactly) one non-special
$(\sge5)$-face. It may not be that $v$ is incident to more than one
special face, since then $v$ would send charge at most
$5\cdot3/2+2+1/2=10$. So the remaining faces are $(\sle4)$-faces. Let us
suppose that $v_0$ is the $2$-vertex, $f_0$ is the special face and
$f_7$ is a $(2,8,\sge4,8)$-face (sent charge at most $5/4$ by rules R1c, R1e and R1f), without
loss of generality. Among the faces $f_1,\ldots,f_6$, (disregarding
which one is the non-special $(\sge5)$-face), there must be three $(\sle4)$-faces that are sequentially adjacent around $v$. Since configurations B4 and D1 are reducible, the middle of these faces may not be a $(3,\sge7,8)$-, $(4,6,8)$- or $(5,5,8)$-triangle and hence is sent charge at most $5/4$. Therefore, the total charge sent by $v$ is at most $4\cdot3/2+2\cdot5/4+1+1/2=10$.

So we assume now that every $(\sge5)$-face incident to $v$ is special
for $v$.
If $v$ has at least four incident special faces, then the
charge sent is at most $4\cdot 3/2+4 = 10$.

\paragraph{Case \ref{subsub:special}(1).}
Suppose that $v$ is incident to exactly three special faces.
If $v$ is incident to at least two $(2,8,\sge4,8)$-faces, each sent charge at most $5/4$ by rule R1c, then the total charge sent by $v$ is at most $3\cdot3/2+2\cdot5/4+3=10$.
If $v$ is incident to exactly one $(2,8,\sge4,8)$-face, then it must be that $v$ is incident to three sequentially adjacent $(\sle4)$-faces, say $f_0$, $f_1$ and $f_2$.  Since configuration B4 is reducible, $f_1$ is not a $(3,7,8)$-, $(4,6,8)$- or $(5,5,8)$-face; since $v$ is incident to a $(2,8,\sge4,8)$-face and configuration D1 is reducible, $f_1$ is not a $(3,8,8)$-face; hence, $f_1$ receives charge at most $5/4$. Consequently, the total charge sent by $v$ is at most $3\cdot 3/2+2\cdot5/4+3=10$.
If $v$ is not incident to a $(2,8,\sge4,8)$-face, then the three incident special faces are sequentially adjacent around $v$.
In the following we shall assume that $f_5$, $f_6$ and $f_7$ are the three special faces.
The remaining five faces are all triangles, otherwise (by rule R1f) the total charge sent by $v$ is at most $4\cdot3/2+4=10$.
Note that there is no $i \in \{1,2,3\}$ such that all of $f_{i-1},f_i,f_{i+1}$ are $(3,\sge7,8)$-triangles since configuration C4 is reducible.  Since configuration B4 is reducible, none of $f_1$, $f_2$, $f_3$ is a $(3,7,8)$-, $(4,6,8)$- or $(5,5,8)$-triangle.  Furthermore, since configuration D2 is reducible, $v$ is incident to at most one pair of adjacent $(3,\sge7,8)$-triangles.  Thus, at most one of $v_1,\ldots,v_4$ is a $3$-vertex.
\begin{description}
\item If none of $v_1,\ldots,v_4$ is a $3$-vertex, then the only faces that can be sent charge more than $5/4$ are $f_0$ and $f_4$.  Therefore, the total charge sent by $v$ is at most $2\cdot3/2+3\cdot5/4+3<10$.
\item Suppose that $v_2$ or $v_3$ is a $3$-vertex, say $v_2$ by
symmetry.  Then $v_1$ and $v_3$ are $8$-vertices. Since 
configuration C4 is reducible, $v_0$ and $v_4$ are $(\sge4)$-vertices and hence $f_0$ and $f_3$ are each sent charge at most $5/4$.  Thus, the total charge sent by $v$ is at most $3\cdot3/2+2\cdot5/4+3=10$.
\item Suppose that $v_1$ or $v_4$ is a $3$-vertex, say $v_1$ by
symmetry.  Then $f_4$ is the only face other than $f_0$ and $f_1$ that
can be sent charge more than $5/4$. In this case, the total charge sent by $v$ is at most $3\cdot3/2+2\cdot5/4+3=10$.
\end{description}

\paragraph{Case \ref{subsub:special}(2).}
Suppose that $v$ is incident to exactly two special faces (and hence is incident to at most two $(2,8,\sge4,8)$-faces).  First, assume that $v$ is incident to a $(2,8,\sge4,8)$-face.
Since configuration B4 is reducible, if $f_i$ is a $(3,7,8)$-,
$(4,6,8)$- or $(5,5,8)$-triangle, then $f_{i-1}$ or $f_{i+1}$ is an $(\sge5)$-face; also, since configuration D1 is reducible (and $v$ is incident to a $(2,8,\sge4,8)$-face), the same conclusion holds if $f_i$ is a $(3,8,8)$-triangle.
Since each incident special face is sequentially adjacent either to an incident $(2,8,\sge4,8)$-face or to the other special face, we deduce that at most two faces are sent charge more than $5/4$.  Thus, the total charge sent by $v$ is at most $2\cdot3/2+4\cdot5/4+2=10$.

Now we deal with the case where $v$ is not incident to a
$(2,8,\sge4,8)$-face. Suppose $f_6$ and $f_7$ are the two special faces, with $v_7$ being a $2$-vertex.
Recall that none of $f_0,\ldots,f_5$ is an $(\sge5)$-face.
Note also that at most one of $f_0,\ldots,f_5$ is a $4$-face,
for otherwise the charge sent by $v$ is at most $4\cdot3/2+4=10$.
We analyse possible $(3,\sge7,8)$-triangles among these six faces.
First, note that there is no $i\in\{1,2,3,4\}$ such
that all of $f_{i-1},f_{i},f_{i+1}$ are $(3,\sge7,8)$-triangles since
configuration C4 is reducible.
Since configuration B4 is reducible, none of
$f_1,\ldots,f_4$ is a $(3,7,8)$-, $(4,6,8)$- or $(5,5,8)$-triangle.
Furthermore, since configuration D2 is reducible, $v$ is
incident to at most one pair of adjacent $(3,\sge7,8)$-triangles.

\paragraph{Case \ref{subsub:special}(2)(a).}
First, we suppose that all of $f_0,\ldots,f_5$ are triangles.  It follows that at most one of $v_1,\ldots,v_5$ is a $3$-vertex.
We consider several cases regarding which neighbours of $v$ are
$(\sle4)$-vertices. 
\begin{description}
\item If none
of $v_1,\ldots,v_5$ is a $3$-vertex, then the only faces that can be
sent charge more than $5/4$ are $f_0$ and $f_5$. Therefore, the total
charge sent by $v$ is at most $2\cdot3/2+4\cdot5/4+2=10$.
\item Suppose that $v_3$ is a $3$-vertex. Hence, $v_2$ and $v_4$ are
$8$-vertices. We show that $f_0$ and $f_1$ are sent charge at most
$5/2$ altogether by $v$. Indeed, if $v_1$ is a $4$-vertex, then $\deg(v_0)\ge7$ because 
configurations B1 and E2 are reducible. Hence, $v$ sends charge $5/4$ to each of
$f_0$ and $f_1$ by rule R1c. If $v_1$ is a $5$-vertex, then
$\deg(v_0)\ge5$ as configuration B1 is reducible, and hence $v$ sends charge $11/10$ to $f_1$ and at most
$7/5$ to $f_0$ by rules R1b and R1e. Last, if $\deg(v_1)\ge6$ then $v$
sends charge $1$ to $f_1$ and at most $3/2$ to $f_0$ by rules R1a and
R1f. Similarly, we deduce that $v$ sends charge at most $5/2$ to
$f_4$ and $f_5$ altogether. Therefore, the total charge sent by $v$ is
at most $2\cdot3/2+2\cdot5/2+2=10$.
\item Suppose that $v_2$ or $v_4$ is a $3$-vertex, say $v_2$ by
symmetry. Then, $v_1$ and $v_3$ are $8$-vertices. 
We have $\deg(v_0)\ge4$ since configuration C4 is reducible, so that $f_0$ receives charge at
most $5/4$ from $v$. Thus, it suffices to show that $v$ sends to $f_3$, $f_4$
and $f_5$ charge at most $15/4$ altogether: the total charge sent by
$v$ would then be at most $2\cdot3/2+5/4+15/4+2=10$.
First, $\deg(v_4)\ge5$ since 
configuration E1 is reducible. Recall that $\deg(v_4)+\deg(v_5)\ge11$.
If $\deg(v_4)+\deg(v_5)\ge12$, then $f_3$ and $f_4$
are sent charge at most $9/4$ altogether by $v$. Thus, the conclusion
holds since $f_6$ is sent charge at most $3/2$ by $v$. Now, if
$\deg(v_4)+\deg(v_5)=11$, then
$\deg(v_5)+\deg(v_6)\ge11$ since configuration C5 is
reducible. Consequently, each of $f_4$ and $f_5$ is sent charge at most
$5/4$ by $v$: recall that none of $v_3$, $v_4$ and $v_5$ is a
$3$-vertex, and if $v_6$ were a $3$-vertex then $v_5$ would be an
$8$-vertex so that $v_4$ would have to be a $3$-vertex in order for the charge sent to be more
than $5/4$.
Moreover, $f_3$ is sent charge at most $11/10$ by rule
R1e, so that the conclusion holds.
\item Suppose that $v_1$ or $v_5$ is a $3$-vertex, say $v_1$ by symmetry. Then, $\deg(v_0)=8=\deg(v_2)$,
and $\deg(v_3)\ge5$ since configuration E1 is reducible. Further, recall that
$v_4$ and $v_5$ both have degree at least $4$.
If $\deg(v_6)\le4$, then $\deg(v_5)\ge6$. Since
$\deg(v_3)+\deg(v_4)\ge11$ (because configuration B4 is reducible), at least one of $f_2$ and $f_4$ is sent charge at most $1$, implying that the total charge sent by $v$ is at most $3\cdot3/2+2\cdot5/4+3=10$.
If $\deg(v_6)\ge5$, then $f_5$ is sent charge at most $7/5$ and $f_2$ is sent charge at most $11/10$, so the total charge sent by $v$ is at most $2\cdot3/2+7/5+2\cdot5/4+11/10+2=10$.
\end{description}

\paragraph{Case \ref{subsub:special}(2)(b).}
Assume now that (exactly) one of $f_0,\ldots,f_5$ is a $4$-face.  (Such a $4$-face is assumed to not have an incident $2$-vertex.)
Without loss of generality, we may suppose that it is one of $f_0$, $f_1$ and $f_2$.
Recall that none of $f_1,\ldots,f_4$ is a $(3,7,8)$-, $(4,6,8)$- or $(5,5,8)$-triangle since configuration B4 is reducible.
Also, at most one of $v_1,\ldots,v_5$ is a $3$-vertex since 
configurations C4 and D2 are reducible.
Moreover, if at most one of $f_1,\ldots,f_4$ is sent charge $3/2$ by $v$ (i.e.~is a $(3,8,8)$-triangle), then the total charge sent by $v$ is at most $3\cdot3/2+2\cdot5/4+3=10$.
In particular, we assume that (exactly) one of $v_2,v_3,v_4$ has degree $3$.
\begin{description}
\item Suppose that the $4$-face is $f_0$. At most three of
$f_1,\ldots,f_5$ are sent charge more than $5/4$, so
the total charge sent by $v$ is at most $3\cdot3/2+2\cdot5/4+3=10$.
\item Suppose that the $4$-face is $f_1$. By the remark above, one of
$v_3$ and $v_4$ has degree $3$.
If $\deg(v_4)=3$, then $\deg(v_3)=\deg(v_5)=8$.
Further, $\deg(v_6)\ge4$ since configuration C4 is reducible. Consequently,
the total charge sent by $v$ is at most $3\cdot3/2+2\cdot5/4+3=10$.
If $\deg(v_3)=3$, then $\deg(v_4)=8$ and we shall see that $f_4$ and $f_5$ are sent at most $5/2$ altogether by $v$. Indeed, let us check all of the subcases:
if $\deg(v_6)\le4$, then $\deg(v_5)\ge6$, implying that $f_4$ is sent charge $1$ and $f_5$ is sent charge at most $3/2$;
if $\deg(v_6)=5$, then $\deg(v_5)\ge5$, implying that $f_4$ is sent charge at most $11/10$ and $f_5$ is sent charge at most $7/5$;
if $\deg(v_6)=6$, then $\deg(v_5)\ge5$ since configuration E2 is reducible, and so each of $f_4$ and $f_5$ are sent charge at most $6/5$; if $\deg(v_6) \ge 7$, then each of $f_4$ and $f_5$ are sent charge at most $5/4$.
Therefore, the total charge sent by $v$ is at most $3\cdot3/2+3+5/2=10$.
\item Suppose that the $4$-face is $f_2$. Then, $\deg(v_4)=3$, for
otherwise at most one face among $f_1,\ldots,f_4$ is sent charge $3/2$
by $v$. Then, $\deg(v_5)=8$ and $\deg(v_6)\ge4$ since configuration C4 is reducible.
Therefore, the total charge sent by $v$ is at most $3\cdot3/2+2\cdot5/4+3=10$.
\end{description}

\paragraph{Case \ref{subsub:special}(3).}
Suppose that $v$ is incident to exactly one special face. Then $v$ is
incident to a $(2,8,\sge4,8)$-face and, since configurations B4 and
D1 are reducible, $v$ is incident to at most one $(3,\sge7,8)$-,
$(4,6,8)$- or $(5,5,8)$-face; the total charge sent by $v$ is at most
$3/2+6\cdot5/4+1=10$.

\subsubsection{There is no incident special face}\label{subsub:nospecial}

Suppose that $v$ is not incident to a special face.  Any $(\sge5)$-face
incident to $v$ is sent charge at most $1/2$ by rule R1g. If there are two such
faces, then the total charge sent by $v$ is at most
$6\cdot3/2+2\cdot1/2=10$.

\paragraph{Case \ref{subsub:nospecial}(1).}
Suppose there is exactly one incident
$(\sge5)$-face, say $f_0$.  None of the faces $f_2,\ldots,f_6$ is a
$(3,7,8)$-, $(4,6,8)$- or $(5,5,8)$-triangle since configuration B4
is reducible.  Since configuration D2 is reducible, among the vertices $v_2,\ldots,v_7$ there is at most one $3$-vertex that is incident to some triangle among $f_2,\dots,f_6$.  None of the vertices $v_2,\ldots,v_7$ is a $2$-vertex
since configuration C1 is reducible (so in particular none of
$f_2,\ldots,f_6$ is a $(2,8,\sle5,8)$-face). Consequently,
$f_1,\ldots,f_7$ are all triangles, or else the total charge sent
by $v$ is at most $4\cdot3/2+2\cdot5/4+1+1/2=10$.
Moreover, one of $v_2,\ldots,v_7$ must be a $3$-vertex
or else the total charge sent by $v$ is at most
$3\cdot3/2+4\cdot5/4+1/2=10$. Suppose,
without loss of generality, that
the $3$-vertex is $v_2$, $v_3$ or $v_4$.
If it is $v_2$, then at most three faces are sent charge $3/2$ and the
total charge sent by $v$ is at most $3\cdot3/2+4\cdot5/4+1/2=10$.  If it
is $v_3$, then $v_2$ has degree $8$ and $v$ sends charge at most $10$
unless $v_1$ has degree $3$; however, this contradicts the reducibility of
configuration C4. If it is $v_4$, then $v_3$ has degree
$8$ and there are three sub-cases. First, if $v_2$ has degree $6$, then
$v$ sends $f_2$ charge $1$ and charge at most
$4\cdot3/2+2\cdot5/4+1+1/2=10$ in total; second, if $v_2$ has degree
$5$, then $v$ sends $f_2$ charge at most $11/10$, $f_1$ charge at most
$7/5$, and charge at most $3\cdot3/2+7/5+11/10+2\cdot5/4+1/2=10$ in
total; third, if $v_2$ has degree $4$, then, since configuration E2
is reducible, $v_1$ has degree at least $7$, so $v$ sends $f_1$ and
$f_2$ each charge at most $5/4$, and charge at most
$3\cdot3/2+4\cdot5/4+1/2=10$ in total.

\paragraph{Case \ref{subsub:nospecial}(2).}
Finally, we are in the case that $v$ is only incident to $(\sle4)$-faces
(in particular none of which are $(2,8,\sle5,8)$-faces since configuration C1 is
reducible). Since configuration B4 is reducible, $v$ is incident to
no $(3,7,8)$-, $(4,6,8)$- or $(5,5,8)$-triangle.  If $v$ is not
incident to a $(3,8,8)$-triangle, then no face is sent charge more than
$5/4$ and hence the total charge sent by $v$ is at most $8\cdot5/4=10$.
So assume that $v_7$ is a $3$-vertex, and that $f_7$ is a
$(3,8,8)$-triangle. Since configuration D2 is reducible,
$v$ is adjacent to no other $3$-vertices. We may assume that $v$
is incident to fewer $4$-faces than the number of $(3,8,8)$-triangles
incident to $v$.  (Otherwise, if $x$ is the number of
$4$-faces incident to $v$, then the total charge sent by $v$
is at most $x\cdot(3/2+1)+(8-2\cdot x)\cdot5/4 = 10$.)
If $f_7$ were the only $(3,8,8)$-triangle then $f_6$ would
necessarily be a $4$-face.
We conclude, therefore, that $v$ is
incident to exactly two $(3,8,8)$-triangles, namely $f_6$ and $f_7$, and to at most one $4$-face. Now, if $v$ is incident to only triangles,
then since configuration E3 is reducible, every neighbour of $v$
other than $v_7$ has degree at least $5$, and so the total charge sent is
at most $2\cdot3/2+4\cdot6/5+2\cdot11/10 = 10$ (where we observe that
the faces adjacent around $v$ to the $(3,8,8)$-triangles are
$(\sge5,8,8)$-faces and hence sent charge at most $11/10$).

Therefore, in addition to the two $(3,8,8)$-triangles, $v$ must be
incident to exactly one $4$-face. By symmetry, let us assume that $f_0$, $f_1$ and $f_2$ are triangles. Since configuration E1 is reducible, $v_1$ has degree at least $5$.
If $v_1$ has degree at least $6$, then $v$ sends $f_0$ charge $1$ and total charge at most $2\cdot3/2+4\cdot5/4+2=10$.
If $v_1$ has degree $5$, then $v$ sends $f_0$ charge $11/10$ and $v_2$ has degree at least $6$. If $v_2$ has degree at least $7$, then $v$ sends $f_1$ charge $11/10$ and total charge at most $2\cdot3/2+3\cdot5/4+2\cdot11/10+1<10$. If $v_2$ has degree $6$, then $v$ sends $f_1$ and $f_2$ each charge at most $6/5$, and total charge at most $2\cdot3/2+2\cdot5/4+2\cdot6/5+11/10+1=10$.

\medskip
We have shown that if $\deg(v) = 8$, then $\ch^*(v)\ge 0$. This
allows us to conclude our analysis of the final charge of $v$, having shown $\ch^*(v) \ge 0$ in all cases.  This completes the proof of Theorem~\ref{thm:main}.\hfill$\Box$

%!TEX root = kss10.tex

\section{Proofs of reducibility}
\label{sec:redproof}

In this section, we prove that the graph $G$ cannot contain any of the configurations given in Section~\ref{sec:reddesc}.

Let $\lambda$ be a (partial) \col{9} of $G$.
For each element $x\in E\cup F$, we define ${\mathcal C}(x)$ to be the set of colours
(with respect to $\lambda$) of the edges and faces incident or adjacent to $x$.
If $x\in V$ we define ${\mathcal E}(x)$ to be the set of colours of the edges incident to $x$.
Moreover, $\lambda$ is \emph{nice} if only some $(\sle4)$-faces
are uncoloured. Observe that every nice colouring can
be greedily extended to an \col{9} of $G$, since
$\abs{{\mathcal C}(f)}\le8$ for each $(\sle4)$-face $f$, i.e.~$f$ has
at most $8$ forbidden colours.
Therefore, in the rest of the paper, we shall always suppose that such
faces are coloured at the very end. More
precisely, every time we consider a partial colouring of $G$, we uncolour
all $(\sle4)$-faces, and implicitly colour them at the very end of
the colouring procedure of $G$.
We make the following observation about nice colourings, which we rely on frequently.
\bigskip

\noindent
\textbf{Observation.} Let $e$ be an edge of $G$ incident to two faces
$f$ and $f'$.
There exists a nice colouring $\lambda$ of $G-e$, and hence
a partial \col{9} of $G$ in which only $e$ and $f$ are uncoloured.
Moreover, if $f$ is an $(\sle4)$-face, then it suffices to properly
colour the edge $e$ with
a colour from $\{1,2,\ldots,9\}$ to extend $\lambda$ to a nice
colouring of $G$.
\bigskip

The following lemma implies the reducibility of configuration A0.  We require the stronger form as it is necessary for later arguments, in particular, for the reducibility of configurations A1--A3.

\begin{lemma}
\label{cutvertex}
Let $v$ be a vertex of $G$ with neighbours $v_0,v_1,\ldots,v_{d-1}$ in clockwise order.
If $v$ is a cut-vertex of $G$, then no component $C$ of $G-v$ is such that
the neighbourhood of $v$ in $C$ is contained in $\{v_i,v_{i+1}\}$ for
some $i\in\{0,1,\ldots,d-1\}$, where the index $i$ is taken modulo $d$.
\end{lemma}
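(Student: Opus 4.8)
The plan is to argue by contradiction using the minimality of $G$, exploiting the fact that removing a single edge yields a plane graph with an \col{9}. Suppose, for contradiction, that $v$ is a cut-vertex and that some component $C$ of $G-v$ has its neighbourhood in $v$ contained in $\{v_i,v_{i+1}\}$ for some $i$. The key geometric observation is that because $v_i$ and $v_{i+1}$ are consecutive in the clockwise order around $v$, the component $C$ lives entirely within the angular sector (wedge) at $v$ between the edges $vv_i$ and $vv_{i+1}$; consequently $C$ together with $v$ is separated from the rest of $G$ in a way that interacts with $G$ only through the single face $f$ of $G$ incident to both $vv_i$ and $vv_{i+1}$ in that wedge.

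First I would set up the decomposition: let $G_1$ be the subgraph induced by $C\cup\{v\}$ and let $G_2$ be the subgraph induced by $(V\setminus C)$, so that $G=G_1\cup G_2$ and $G_1\cap G_2=\{v\}$, with both $G_1$ and $G_2$ being plane subgraphs of $G$ having strictly fewer edges than $G$ (since $C$ is nonempty and the rest of $G$ contributes at least one edge at $v$). By minimality, each $G_j$ admits an \col{9}, say $\lambda_j$. The heart of the argument is to show these two colourings can be reconciled into an \col{9} of $G$, contradicting the assumption that $G$ has none. The main obstacle, and the reason the hypothesis on consecutiveness is essential, is controlling the faces: I would verify that every face of $G$ is either a face of $G_1$, a face of $G_2$, or arises from merging the two ``outer'' faces at $v$ (the one face $f$ bordering the wedge), and that no edge or face constraint crosses between $G_1\setminus\{v\}$ and $G_2\setminus\{v\}$ except through incidences at $v$ itself.

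The reconciliation step proceeds by permuting colours. Since we have full freedom to relabel the $9$ colours in $\lambda_1$, I would first permute $\lambda_1$ so that the edges of $G_1$ incident to $v$ receive colours disjoint from (or appropriately compatible with) the edges of $G_2$ incident to $v$; because $\deg_G(v)\le 8$, the total number of edges at $v$ is at most $8<9$, so a proper colouring of $\mathcal{E}(v)$ using the $9$ available colours always exists, and the consecutiveness hypothesis guarantees that the faces of $G_1$ and $G_2$ meeting at $v$ can likewise be assigned compatible colours. I would then invoke the Observation and the notion of a nice colouring: uncolouring the $(\le4)$-faces and treating the shared wedge face last, I reduce the gluing to a constraint-counting argument showing fewer than $9$ colours are forbidden at each contested element. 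The delicate point to get right is the single shared face $f$ in the wedge, whose boundary involves edges from both $G_1$ and $G_2$; here I would argue that its list of forbidden colours has size at most $8$ (using $\Delta\le8$ and the fact that only $(\ge5)$-faces must be coloured eagerly, with $(\le4)$-faces deferred), so it too can be coloured, completing the \col{9} of $G$ and yielding the desired contradiction.
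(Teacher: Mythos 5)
Your overall strategy coincides with the paper's: split $G$ into the two edge-disjoint plane subgraphs $G_1=G[C\cup\{v\}]$ and $G_2=G[V\setminus C]$, obtain a $9$-colouring of each from the minimality of $G$, observe that the only constraints crossing the two parts are the edge-edge adjacencies at $v$ and the single face $f$ of $G$ obtained by merging the outer face $f_1$ of $G_1$ with the face $f_2$ of $G_2$ that contains the region occupied by $C$, and then reconcile the two colourings by permuting colours. Up to that point your proposal is sound.

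The genuine gap is in how you colour the merged face $f$. You propose to glue the two colourings after only making the edge colours at $v$ disjoint, leave $f$ uncoloured, and colour it last by a counting argument, claiming its list of forbidden colours has size at most $8$ ``using $\Delta\le 8$''. This is false: the colours forbidden for $f$ are those of all edges incident to $f$ and of all faces adjacent to $f$, and their number is governed by the degree of the \emph{face} $f$, not by the maximum vertex degree. Since $f$ contains the entire outer boundary of $C$ together with part of the boundary of $G_2$, its degree is unbounded (and is always at least $5$, so $f$ can never be deferred the way $(\sle4)$-faces are in the nice-colouring machinery -- the Observation applies only to $(\sle4)$-faces precisely because those have at most $8$ constraints). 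Indeed, after an arbitrary gluing the set of colours still available for $f$ is contained in $\{\lambda_1(f_1)\}\cap\{\lambda_2(f_2)\}$, which is empty unless the two face colours already agree, so no greedy or counting argument can finish here. The fix -- and this is exactly what the paper does -- is to use the permutation freedom to force $\lambda_1(f_1)=\lambda_2(f_2)$ \emph{before} gluing, simultaneously with the disjointness at $v$: normalise $\lambda_1$ so that $\lambda_1(f_1)=1$, $\lambda_1(vv_0)=8$, $\lambda_1(vv_1)=9$, and permute $\lambda_2$ so that $\lambda_2(f_2)=1$ while the at most six edges of $G_2$ at $v$ get colours in $\{1,\ldots,7\}$ (possible, as these are at most seven elements of $G_2$). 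Then the merged face is consistently coloured $1$, every edge incident to it avoids $1$ because this held separately in $G_1$ and $G_2$, and nothing remains to be coloured at the end. Your phrase that the faces meeting at $v$ ``can likewise be assigned compatible colours'' gestures at this normalisation, but as written your argument replaces it by a count that does not hold.
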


\begin{proof}
Suppose on the contrary that $C$ is a component of $G-v$ such
that the neighbourhood of $v$ in $C$ is contained in, say,
$\{v_0,v_1\}$.

First, assume that the neighbourhood of $v$ in $C$ is $\{v_0,v_1\}$. Then $G$ is the edge-disjoint union of
two plane graphs $G_1=(C\cup\{v\},E_1)$ and $G_2=(V\setminus C,E_2)$.
The outer face $f_1$ of $G_1$
corresponds to a face $f_2$ of $G_2$. By the minimality of $G$, the
graph $G_i$ has an \col{9} $\lambda_i$ for $i\in\{1,2\}$.
Since both $vv_0$ and $vv_1$ are incident in $G_1$ to $f_1$, we may assume that $\lambda_1(f_1)=1$, $\lambda_1(vv_0)=8$ and $\lambda_1(vv_1)=9$.
Regarding $\lambda_2$, we may
assume that $\lambda_2(f_2)=1$. Furthermore, up to permuting the
colours, we can also assume that the colours of the edges of $G_2$
incident to $v$ are contained in $\{1,2,\ldots,7\}$, since there are at
most $6$ such edges.

We now define an \col{9} $\lambda$ of $G$
as follows. For every edge $e$ of $G$, set $\lambda(e):=\lambda_1(e)$ if
$e\in E_1$ and $\lambda(e):=\lambda_2(e)$ if $e\in E_2$.
To colour the faces of $G$, let $f$ be the face of $G$ incident to both
$vv_0$ and $vv_{d-1}$. (Note that there is only one such face, since
otherwise $v$ would have degree $2$, which would be a contradiction.)
Now observe that there is a natural one-to-one
correspondence between the faces of $G_1$ and a subset $F_1$ of
the face set $F$ of $G$ that maps $f_1$ to $f$.
Similarly, there is a natural one-to-one
correspondence between the faces of $G$ and a subset $F_2$ of $F$ that
maps $f_2$ to $f$. Note that $F_1\cap F_2=\{f\}$. Now, we
can colour every face $f\in F_i$ using $\lambda_i$. This is well defined
since $\lambda_1(f_1)=\lambda_2(f_2)=1$.

Let us check that $\lambda$ is proper.
Two adjacent edges of $G$ are assigned different colours. Indeed, if the two edges
belong to $E_i$ for some $i\in\{1,2\}$, then it comes from the fact that
$\lambda_i$ is an \col{9} of $G_i$. Otherwise, both edges are incident
with $v$, and one is in $G_1$ and the other in $G_2$. The former is
coloured either $8$ or $9$, and the latter with a colour of
$\{1,2,\ldots,7\}$ by the choice of $\lambda_1$ and $\lambda_2$.
Two adjacent faces in $G$
necessarily correspond to two adjacent faces in $G_1$ or $G_2$, and
hence are assigned different colours. Last, let $g$ be a face of $G$ and
$e$ an edge incident to $g$ in $G$. If $g\neq f$, then $g$ and $e$ are
incident in $G_1$ or $G_2$, and hence coloured differently. Otherwise
$e$ is incident to $f_i$ in $G_i$ for some $i\in\{1,2\}$, and hence
$\lambda(e)=\lambda_i(e)\neq\lambda_i(f_i)=1=\lambda(f)$.

The case where the neighbourhood of $v$ in $C$ is $\{v_0\}$, i.e.\ $vv_0$ is a cut-edge, is dealt with
in the very same way so we omit it.
\end{proof}

The next lemma shows the reducibility of configurations B1 and C1.
\begin{lemma}
\label{uv1112}
Let $uv$ be an edge of $G$, and let $s\in\{1,2\}$ be the number of
$(\sle4)$-faces incident to $uv$. Then
$\deg(u)+\deg(v)\ge9+s$.
\end{lemma}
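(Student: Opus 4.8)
The plan is to prove the contrapositive-style bound directly by a discharging/extension argument: assume $uv$ is incident to $s$ faces of size at most $4$ and show that if $\deg(u)+\deg(v) \le 8+s$ then $G-uv$ has a nice colouring that extends to $G$, contradicting the minimality of $G$. By the Observation, there is a nice colouring $\lambda$ of $G-uv$, and all $(\sle4)$-faces (including the $s$ small faces incident to $uv$) are left uncoloured. So it suffices to show that we can properly colour the edge $uv$ itself with one of the $9$ colours; the small incident faces are then filled in greedily at the very end, and any $(\sge5)$-face incident to $uv$ is already coloured by $\lambda$ and imposes a constraint on $uv$.

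First I would count the constraints on $uv$. The colour of $uv$ must differ from the colours of all edges adjacent to $uv$, of which there are at most $(\deg(u)-1)+(\deg(v)-1) = \deg(u)+\deg(v)-2$, and from the colours of the $(\sge5)$-faces incident to $uv$ (the $(\sle4)$-faces are uncoloured so impose no constraint). Since $uv$ is incident to exactly two faces in total, the number of coloured faces constraining $uv$ is $2-s$. Hence the total number of forbidden colours is at most $(\deg(u)+\deg(v)-2)+(2-s) = \deg(u)+\deg(v)-s$. If this quantity is at most $8$, i.e.\ if $\deg(u)+\deg(v) \le 8+s$, then at least one colour in $\{1,\ldots,9\}$ remains available for $uv$, contradicting the fact that $G$ has no \col{9}. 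This establishes $\deg(u)+\deg(v)\ge 9+s$.

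The main subtlety to get right is the bookkeeping of which incident faces are already coloured versus uncoloured, since this is exactly where the parameter $s$ enters. Because $uv$ is incident to precisely two faces, and $s$ of them are $(\sle4)$-faces that we have uncoloured, only $2-s$ of the incident faces contribute a forbidden colour; this is what sharpens the trivial edge-colouring bound by $s$. I would double-check the edge-adjacency count as well: $uv$ shares an endpoint with at most $\deg(u)-1$ edges at $u$ and $\deg(v)-1$ at $v$, and since $G$ is simple these are all distinct from one another and from $uv$. The only remaining point is to confirm that after colouring $uv$ the resulting colouring is genuinely nice, so that the greedy completion of the $(\sle4)$-faces goes through; but this is immediate from the Observation, since after colouring $uv$ the only uncoloured elements are $(\sle4)$-faces. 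I do not anticipate a hard obstacle here — the argument is a clean counting bound — so the care lies entirely in correctly accounting for the $s$ uncoloured small faces.
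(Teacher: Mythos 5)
Your proof is correct and takes essentially the same route as the paper's: obtain a nice colouring of $G-uv$ by minimality, pass to a partial colouring of $G$ in which $uv$ and the $(\le 4)$-faces are uncoloured, bound the number of forbidden colours on $uv$ by $(\deg(u)+\deg(v)-2)+(2-s)\le 8$, and colour $uv$ greedily to contradict the minimality of $G$. The only point you gloss over --- and which the paper's proof spells out --- is that deleting $uv$ merges its two incident faces into a single face $f''$ of $G-uv$, so an incident $(\ge 5)$-face $f'$ of $G$ is not literally ``coloured by $\lambda$''; one must assign $\lambda(f'')$ to $f'$ and check this is proper, which is exactly the transfer encapsulated in the Observation you invoke.
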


\begin{proof}
Suppose on the contrary that $\deg(u)+\deg(v)\le8+s$.
Let $f$ and $f'$ be the two faces incident to $uv$.

Without loss of generality assume that
$f$ is an $(\sle4)$-face. By the minimality of $G$, the graph $G-uv$ has a nice colouring
$\lambda$. Let $f''$ be the face of $G-uv$ corresponding to the union of
the two faces $f$ and $f'$ of $G$ after having removed the edge $uv$.
We obtain a partial \col{9} of $G$ in which only $uv,f$ and the
$(\sle4)$-faces are uncoloured by just assigning the colour $\lambda(f'')$ to $f'$, and
keeping all the other assignments.

Consequently, $\abs{{\mathcal C}(uv)}\le\deg(u)+\deg(v)-2+2-s\le8$.
Hence, we can properly colour the edge $uv$, thereby obtaining a nice
colouring of $G$; a contradiction.
\end{proof}

In light of Lemma~\ref{uv1112}, we make the following definition and observation.
An edge $uv$ of $G$ is called \emph{tight} if $\deg(u)+\deg(v)-s=9$, where $s\in\{1,2\}$ is the number of $(\sle4)$-faces incident to $uv$.
\bigskip

\noindent
\textbf{Observation.}
Assume that $c$ is an \col{9} of $G$ in
which only $uv$ and the $(\sle4)$-faces are uncoloured. Let $S$ be the
(possibly empty) set of colours assigned by $c$ to the $(\sge5)$-faces
incident to $uv$. If $uv$ is tight, then the sets ${\mathcal E}(u)$,
${\mathcal E}(v)$ and $S$ are pairwise disjoint, and ${\mathcal C}(uv)={\mathcal E}(u)\cup{\mathcal E}(v)\cup S=\{1,\ldots,9\}$.
\bigskip

The reducibility of configurations B2, B3, B4, C2 and C3 follows from the next lemma.
\begin{lemma}
\label{lem:uvw7}
Let $uvw$ be a triangle of $G$ such that $\deg(u)+\deg(v)=10+s$, where
$s\in\{0,1\}$ is the number of $(\sle4)$-faces distinct from $uvw$
incident to $uv$, and let $t\in\{0,1,2\}$ be the number of
$(\sle4)$-faces distinct from $uvw$ incident to $uw$ or $vw$.
Then $\deg(w)\ge7+t$.
\end{lemma}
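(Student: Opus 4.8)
The plan is to argue by contradiction, following the same style as Lemmas~\ref{uv1112} and~\ref{cutvertex}. Suppose that $\deg(w)\le 6+t$. I would then use the minimality of $G$ to obtain a nice colouring $\lambda$ of $G-uv$, and show that there are enough free colours to recolour $uv$ and extend $\lambda$ to a nice colouring of $G$, contradicting the assumption that $G$ has no \col{9}. Since $\deg(u)+\deg(v)=10+s$ by hypothesis, the edge $uv$ is tight: the number of $(\sle4)$-faces incident to $uv$ is $s+1$ (the extra one being the triangle $uvw$ itself), so $\deg(u)+\deg(v)-(s+1)=9$, exactly the tightness condition from the Observation following Lemma~\ref{uv1112}.

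\textbf{Exploiting tightness.} The key tool is the Observation about tight edges. Colour $G-uv$ nicely, and then consider the partial colouring of $G$ in which only $uv$ and the $(\sle4)$-faces are uncoloured. If we could colour $uv$ we would be done, so we must be unable to: by the Observation, tightness forces $\mathcal{C}(uv)=\mathcal{E}(u)\cup\mathcal{E}(v)\cup S=\{1,\ldots,9\}$, where $S$ is the set of colours on the $(\sge5)$-faces incident to $uv$, and these three sets are pairwise disjoint. The idea is then to \emph{modify} the colouring---either by recolouring one of the edges at $w$ or by recolouring the triangle $uvw$ and an adjacent $(\sle4)$-face---so as to free up a colour for $uv$, reaching a contradiction with this rigid structure.

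\textbf{The main case analysis.} The heart of the argument is to track how the $t$ $(\sle4)$-faces incident to $uw$ or $vw$ interact with the colour $\lambda(uvw)$ and with $S$. Since $w$ has small degree ($\deg(w)\le 6+t$), the set $\mathcal{E}(w)$ is small, leaving a spare colour that can be assigned to the edge $wu$ or $wv$; each such recolouring, combined with uncolouring and recolouring the triangle $uvw$, should liberate a colour outside $\mathcal{E}(u)\cup\mathcal{E}(v)$ that can be used on $uv$. The bookkeeping splits according to the value of $t\in\{0,1,2\}$ and $s\in\{0,1\}$, since $t$ controls how many adjacent $(\sle4)$-faces (whose colours are flexible, as they are coloured last) border the edges at $w$, and the bound $\deg(w)\ge 7+t$ is calibrated precisely so that each additional tight face at $w$ costs one unit of degree.

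\textbf{The main obstacle.} I expect the difficulty to lie in the interplay between the rigidity imposed by tightness of $uv$ and the freedom gained at $w$: one must verify that the recolouring of edges incident to $w$ does not merely shuffle colours within $\{1,\ldots,9\}$ without actually producing a free colour for $uv$, and that the $(\sle4)$-faces adjacent to $uw$ and $vw$---whose colours are only assigned at the very end---genuinely provide the needed slack rather than introducing new conflicts. Carefully distinguishing whether the liberated colour at $w$ lies in $\mathcal{E}(u)$, in $\mathcal{E}(v)$, or in $S$, and arguing that at least one choice avoids all conflicts, is where the counting must be executed with care; this is the step most likely to require the full strength of the hypothesis $\deg(u)+\deg(v)=10+s$ together with the $(\sle4)$-face count $t$.
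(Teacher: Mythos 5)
Your setup is correct and matches the paper's: by minimality there is a partial \col{9} of $G$ in which only $uv$ and the $(\sle4)$-faces are uncoloured, and $uv$ is tight since it is incident to $s+1$ $(\sle4)$-faces (counting $uvw$), so ${\mathcal E}(u)$, ${\mathcal E}(v)$ and $S=\{\alpha_{uv}\}$ partition $\{1,\ldots,9\}$. But beyond this point the proposal is a plan, not a proof: the actual recolouring argument is deferred with phrases like ``should liberate a colour'' and ``is where the counting must be executed with care,'' and no concrete recolouring step or count is ever carried out. That missing step is precisely the content of the lemma, so the proposal has a genuine gap.

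Moreover, the direction you sketch---use the small degree of $w$ to find a spare colour at $w$, recolour $wu$ or $wv$ with it, and hand the freed colour to $uv$---would fail as a direct count: to properly recolour $vw$ with a colour $\xi$ one must avoid ${\mathcal E}(v)$ as well as ${\mathcal E}(w)\cup\{\alpha_{vw}\}$, and $\deg(v)$ can be as large as $8$, so smallness of ${\mathcal E}(w)$ alone does not produce such a $\xi$. The paper's proof runs the logic the other way: candidate colours $\xi$ are drawn from ${\mathcal E}(u)\cup\{\alpha_{uv}\}$, which is automatically disjoint from ${\mathcal E}(v)$ by tightness, so the only possible obstruction to colouring $uv$ with $c(vw)$ and recolouring $vw$ with $\xi$ is $\xi\in{\mathcal E}(w)\cup\{\alpha_{vw}\}$. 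Since $G$ has no nice colouring, every such $\xi$ is obstructed, giving ${\mathcal E}(u)\cup\{\alpha_{uv}\}\subseteq{\mathcal E}(w)\cup\{\alpha_{vw}\}$ and, symmetrically, ${\mathcal E}(v)\cup\{\alpha_{uv}\}\subseteq{\mathcal E}(w)\cup\{\alpha_{uw}\}$. As the two left-hand sides together cover $\{1,\ldots,9\}$ and at most $2-t$ of the colours $\alpha_{vw},\alpha_{uw}$ exist, this yields $9\le\deg(w)+2-t$, i.e.\ $\deg(w)\ge7+t$, with no case analysis on $s$ and $t$ and no recolouring of the triangle $uvw$ (which, being an $(\sle4)$-face, stays uncoloured until the very end).
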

\begin{proof}
As we pointed out, there exists a partial \col{9} $c$ of $G$ in which
only $uv$ and the $(\sle4)$-faces are left uncoloured.
Let $\alpha_{uv}$, $\alpha_{vw}$ and $\alpha_{uw}$ be the colours, if any, assigned to the
$(\sge5)$-faces incident to $uv$, $vw$ and $uw$, respectively.
Since the edge $uv$ is tight, ${\mathcal E}(u)$, ${\mathcal E}(v)$ and
$\{\alpha_{uv}\}$ form a partition of $\{1,2,\ldots,9\}$. Thus, if there is a colour
$\xi\in{\mathcal E}(u)\cup\{\alpha_{uv}\}$ that is not in
${\mathcal E}(w)\cup\{\alpha_{vw}\}$, then we can colour $uv$ with
$c(vw)$ and next recolour $vw$ with $\xi$ to obtain a nice colouring
of $G$. We deduce that
${\mathcal E}(u)\cup\{\alpha_{uv}\}\subseteq{\mathcal E}(w)\cup\{\alpha_{vw}\}$.
Similarly, ${\mathcal E}(v)\cup\{\alpha_{uv}\}\subseteq{\mathcal E}(w)\cup\{\alpha_{uw}\}$.
Hence, $\deg(w)+2-t \ge 9$, so $\deg(w)\ge7+t$, as required.
\end{proof}

The following verifies that configuration A1 is reducible. The lemma is also needed for showing the reducibility of configurations A3, C4 and C5.
\begin{lemma}
\label{lem:a1}
Let $u,v,w$ be vertices of $G$ with 
$\deg(v)=2$. Then $uvw$ is not a face of $G$.
\end{lemma}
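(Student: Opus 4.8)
The plan is to show that a $2$-vertex cannot lie on a triangle by combining the degree constraints forced by Lemma~\ref{uv1112} with a short recolouring argument. Since $\deg(v)=2$, the two edges at $v$ are exactly $uv$ and $vw$, and the triangle $uvw$ also carries the edge $uw$. Write $f_1$ for the triangular face $uvw$ and $f_2$ for the other face incident to $v$; note that $f_2$ is incident to both $uv$ and $vw$. First I would pin down the degrees: the edge $uv$ is incident to the $(\le4)$-face $f_1$, so $s\ge1$ in Lemma~\ref{uv1112}, and since $\deg(u)+\deg(v)=\deg(u)+2\le10$, that lemma forces $s=1$ and $\deg(u)=8$. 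In particular $f_2$ is an $(\ge5)$-face, and by the symmetric argument at $w$ we also get $\deg(w)=8$.

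Next I would remove $uv$ and invoke the first Observation: there is a nice colouring $\lambda$ of $G-uv$ giving a partial \col{9} of $G$ in which only $uv$ and $f_1$ are uncoloured, where the merged face $f''=f_1\cup f_2$ of $G-uv$ transfers its colour to $f_2$. Because $uv$ is tight, the second Observation yields a partition ${\mathcal E}(u)\sqcup\{\beta\}\sqcup\{\gamma\}=\{1,\dots,9\}$, where $\beta=\lambda(vw)$ and $\gamma=\lambda(f_2)=\lambda(f'')$; in particular no colour is directly free for $uv$, so recolouring is unavoidable. Since $\deg(w)=8$, exactly one colour $\mu$ is missing at $w$. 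If $\mu\ne\gamma$, then I would recolour $vw$ with $\mu$ (legitimate, as $\mu$ is absent at $w$, the inequality $\mu\ne\gamma$ respects $f_2$, and $\mu\ne\beta$) and then colour $uv$ with the freed colour $\beta$; checking all incidences completes this case.

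The crux is the case $\mu=\gamma$, that is, when the colour of $f_2$ is precisely the unique colour missing at $w$, so that $vw$ is locked to $\beta$ and cannot be recoloured. Here I would instead recolour the edge $uw$. Writing $\delta=\lambda(uw)$ and letting $f_3$ be the second face incident to $uw$, with colour $\epsilon$, the colour $\gamma$ is absent from every coloured edge at both $u$ and $w$, so $uw$ may be recoloured with $\gamma$ provided $\epsilon\ne\gamma$; afterwards $\delta$ becomes free for $uv$, and one checks $uv:=\delta$ is proper. The inequality $\epsilon\ne\gamma$ holds almost for free: in $G-uv$ the edge $uw$ separates the $\gamma$-coloured merged face $f''$ from $f_3$, so properness of $\lambda$ forces $\lambda(f_3)\ne\gamma$ unless $f''=f_3$.

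The single remaining, genuinely degenerate possibility $f''=f_3$ means $uw$ is a bridge of $G-uv$; I expect this to be the main obstacle. I would dispose of it using Lemma~\ref{cutvertex} together with the minimality of $G$ (the coincidence $f''=f_3$ forces a component of $G$ attached to $u$ or $w$ at only two consecutive neighbours, which that lemma forbids), or, failing that, via a direct recolouring on the $f_2$ side. Modulo this degenerate sub-case, the argument produces a nice colouring of $G$, contradicting the choice of $G$ and showing that $uvw$ cannot be a face.
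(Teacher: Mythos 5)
Your proposal is correct and takes essentially the same approach as the paper: the same partial colouring of $G-uv$ with the tightness partition at $uv$, the same key recolouring (give $uw$ the colour of the $(\sge5)$-face and free up $c(uw)$ for $uv$), and the same appeal to Lemma~\ref{cutvertex} to rule out the degenerate case where the face beyond $uv,vw$ coincides with the face beyond $uw$. Your case split $\mu\ne\gamma$ versus $\mu=\gamma$ is just a reorganisation of the paper's derivation that the colour of the $(\sge5)$-face is precisely the unique colour absent from ${\mathcal E}(w)$, so the two arguments match step for step.
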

\begin{proof}
Suppose on the contrary that $uvw$ is a face of $G$.
There exists a
nice colouring $c$ of $G-uv$.
Note that the face $f_1$ of $G$ other than $uvw$ that is incident to both $uv$ and $vw$ must be an $(\sge5)$-face, or else $\abs{{\mathcal C}(uv)} \le 8$ and we can immediately extend $c$ to a nice colouring of $G$.
Note that $f_1$ is distinct from the face $f_2$ of $G$ other than $uvw$ that is incident to $uw$; otherwise, one of $u$ or $w$ is a cut-vertex of a type forbidden by Lemma~\ref{cutvertex}.
Let $\beta_{uw}$ be the colour, if any, assigned by $c$ to $f_2$.

Observe that $c(f_1) \notin \{\beta_{uw}, c(uw)\}$.
Indeed, since $uv$ is tight, the sets ${\mathcal E}(u)$, $\{c(vw)\}$ and $\{c(f_1)\}$
are pairwise disjoint.
Since $vw$ is tight, we
deduce that $c(f_1)\notin{\mathcal E}(w)$, for otherwise we could colour
$uv$ with $c(vw)$ and next recolour $vw$ with a colour from
$\{1,\ldots,9\}\setminus{\mathcal E}(w)$.
Hence $c(f_1) \notin {\mathcal E}(u) \cup {\mathcal E}(w) \cup
\{\beta_{uw}\}$, so that colouring $uv$ with $c(uw)$ and next
recolouring $uw$ with $c(f_1)$ yields a nice colouring of $G$; a contradiction.
\end{proof}

The next lemma will be used to show the reducibility of configurations A3 and E4.
\begin{lemma}\label{smalladj}
Let $v$ be a $2$-vertex of $G$, and let $u$ and $w$ be its two neighbours.
If $\deg(u)\le6$, then $u$ and $w$ are adjacent in $G$.
\end{lemma}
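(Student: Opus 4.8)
The plan is to argue by contradiction, supposing that $v$ is a $2$-vertex with neighbours $u$ and $w$ where $\deg(u)\le 6$, but $u$ and $w$ are \emph{not} adjacent in $G$. Since configuration A1 is reducible (Lemma~\ref{lem:a1}), the triangle $uvw$ is not a face, so the two faces incident to $uv$ are both $(\sge5)$-faces; in particular, the face incident to both $uv$ and $vw$ and the face incident to both $uv$ and the other side each have size at least $5$. The strategy is to delete the edge $uv$, obtain a nice colouring $c$ of $G-uv$ by minimality, and then count the colours forbidden on $uv$, showing that fewer than $9$ colours are forbidden so that $uv$ can be recoloured, contradicting the non-extendability.

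First I would set up the colour-counting. The forbidden colours for $uv$ come from ${\mathcal E}(u)$, from ${\mathcal E}(w)$ (since $v$ has degree $2$, its only other incident edge is $vw$), and from the colours of the two $(\sge5)$-faces incident to $uv$. The edge $uv$ is tight, so by the Observation following Lemma~\ref{uv1112}, in any colouring leaving only $uv$ (and the $(\sle4)$-faces) uncoloured, the sets ${\mathcal E}(u)$, ${\mathcal E}(w)$ (here $w$ plays the role of the second endpoint $v$ of that Observation), and the set $S$ of colours on the incident $(\sge5)$-faces partition $\{1,\ldots,9\}$. The key leverage is the bound $\deg(u)\le 6$: this limits $|{\mathcal E}(u)|\le 6$, so the total number of forbidden colours is controlled. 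The main idea is that when $u$ and $w$ are non-adjacent, the edges at $u$ and the edges at $w$ (other than $vw$) together cannot cover all $9$ colours once the face constraints are accounted for, \emph{unless} $u$ and $w$ share structure that forces adjacency.

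The crux is to exploit non-adjacency of $u$ and $w$ to gain a recolouring move. I expect the cleanest route is a local recolouring argument analogous to those in Lemmas~\ref{lem:uvw7} and~\ref{lem:a1}: using that $uv$ and $vw$ are both tight and that the bounding faces are large, I would try to find a colour $\xi$ available at $vw$ (i.e.\ in $\{1,\ldots,9\}\setminus({\mathcal E}(w)\cup\{\alpha\})$ for the relevant face colour $\alpha$) that can be swapped so as to free a colour for $uv$. Concretely, one recolours $vw$ and then $uv$, and the non-adjacency of $u$ and $w$ is what guarantees no new conflict is created at the shared region, because the face between them is an $(\sge5)$-face rather than a triangle. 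The degree bound $\deg(u)\le 6$ ensures there is genuine slack in the count.

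The main obstacle I anticipate is handling the interaction between $u$ and $w$ through their common incident $(\sge5)$-faces carefully, so that the recolouring of $vw$ does not inadvertently clash with an edge at $u$. Because $u$ and $w$ are assumed non-adjacent, I must make sure the face-colour constraints $\alpha_{uv}, \alpha_{vw}$ are used correctly and that the Observation's partition property is invoked on the right pair of endpoints. Establishing that the relevant faces are distinct (so that no cut-vertex of the type forbidden by Lemma~\ref{cutvertex} arises) is a necessary preliminary, mirroring the corresponding step in the proof of Lemma~\ref{lem:a1}; this is where the structural hypothesis that $uvw$ is not a face gets fully used. Once these incidences are pinned down, the counting with $\deg(u)\le 6$ should force a free colour on $uv$, yielding the desired contradiction and hence the adjacency of $u$ and $w$.
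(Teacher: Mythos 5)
Your approach has a genuine gap that the paper's proof is specifically designed to avoid. You delete $uv$ and then count the colours forbidden on $uv$, but deletion destroys the face structure you need: since $\deg(u)+\deg(v)\le 8$, Lemma~\ref{uv1112} (reducibility of B1) forces \emph{both} faces $f$ and $f'$ incident to $uv$ to be $(\sge5)$-faces (your deduction of this from A1 alone is incorrect, since A1 only excludes triangles, not $4$-faces, though the conclusion happens to be right), and in $G-uv$ these two faces merge into a single face $f''$. A nice colouring of $G-uv$ therefore provides one colour for $f''$, and to recover an \col{9} of $G$ you must properly colour \emph{both} $f$ and $f'$, which are adjacent to each other (they share $uv$) and are $(\sge5)$-faces, each with up to $2\deg(f)$ constraints; neither the niceness convention (which only permits postponing $(\sle4)$-faces) nor any greedy count handles them. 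Your colour count for $uv$ silently assumes $f$ and $f'$ already carry colours, which is exactly what deletion fails to provide. Your appeal to tightness is also wrong on two counts: $uv$ is not tight (tightness requires an incident $(\sle4)$-face and $\deg(u)+\deg(v)=9+s$, whereas here $s=0$ and $\deg(u)+\deg(v)\le8$), and the Observation's conclusion --- that ${\mathcal E}(u)$, ${\mathcal E}(v)$ and $S$ partition all nine colours --- is precisely the statement that $uv$ \emph{cannot} be greedily coloured, the opposite of what you want to establish.

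The paper's proof instead \emph{contracts} the edge $uv$. This is the key idea your proposal is missing, and it is where both hypotheses do their work: non-adjacency of $u$ and $w$ guarantees that the contracted graph $G'$ is simple (the introduction flags that simplicity is needed precisely for this lemma), while $\deg(u)\le6$ both keeps $\Delta(G')\le8$ (the merged vertex has degree $\deg(u)$) and drives the final count. Contraction, unlike deletion, keeps the two faces distinct --- each simply loses one boundary edge --- so the colours of the corresponding faces $g,g'$ of $G'$ transfer directly to $f,f'$, leaving only the edge $uv$ uncoloured, with $\abs{{\mathcal C}(uv)}\le(\deg(u)-1)+(\deg(v)-1)+2=\deg(u)+2\le8$ forbidden colours, so it can be coloured greedily. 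If you wanted to salvage a deletion-based argument, you would need a separate mechanism for colouring two mutually adjacent $(\sge5)$-faces, and no such mechanism appears in your sketch.
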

\begin{proof}
Suppose on the contrary that $u$ and $w$ are not adjacent in $G$.
Then, the graph $G'$ obtained by contracting the edge
$uv$ is planar, simple and has maximum degree at most $8$.
By the minimality of $G$, let $\lambda$ be a nice colouring of $G'$.
Let $g$ and $g'$ be the faces of $G'$ corresponding
to the contracted faces $f$ and $f'$ of $G$, respectively.
We obtain a partial \col{9} of $G$ in which only $uv$
is uncoloured by assigning the colour $\lambda(g)$ to $f$, the colour $\lambda(g')$ to $f'$, and
keeping all the other assignments.

Now, $\abs{{\mathcal C}(uv)}\le\deg(u)+\deg(v)-2+2\le8$.
Consequently, we can properly colour the edge $uv$ to obtain a nice
colouring of $G$; a contradiction.
\end{proof}

We now deduce the reducibility of configuration A3.
\begin{corollary}
Let $v$ be a $2$-vertex of $G$ and let $u$ and $w$ be its two neighbours. If $\deg(u)=3$, then $\deg(w)\ge6$.
\end{corollary}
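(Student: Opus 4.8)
The plan is to argue by contradiction: suppose $v$ is a $2$-vertex with $\deg(u)=3$ but $\deg(w)\le 5$. The first move is to apply Lemma~\ref{smalladj} to the pair $(u,w)$. Since $\deg(u)=3\le 6$, the lemma forces $u$ and $w$ to be adjacent, so that $uvw$ is a triangle (a $3$-cycle) of $G$. Lemma~\ref{lem:a1} then tells us that this triangle is \emph{not} a face of $G$. The whole difficulty is that these two facts do not yet clash: a $2$-vertex with adjacent neighbours need not lie on a \emph{facial} triangle, so I must show that in the present degree regime a non-facial triangle is impossible.

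The heart of the argument is planar. Write $x$ for the third neighbour of $u$. Because $\deg(v)=2$, the vertex $v$ sends no edge into either region bounded by the $3$-cycle $uvw$; because $\deg(u)=3$, the only edge of $u$ off the triangle, namely $ux$, lies on one side of the cycle. On the \emph{opposite} side, $u$ therefore contributes no edge. If that side carried no further part of $G$, the triangle would bound a face there, contradicting Lemma~\ref{lem:a1}; hence that side carries some of $G$, and this part is attached to the triangle only through $w$ (neither $u$ nor $v$ has an edge into it). Consequently $w$ is a cut-vertex, and the attached component $C$ meets $w$ exactly along a block of \emph{consecutive} neighbours lying strictly between $u$ and $v$ in the rotation at $w$.

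To conclude I would bring in the degree bound. As $wu$ and $wv$ already occupy two slots and $\deg(w)\le 5$, the component $C$ can attach to $w$ at most three consecutive neighbours. If it attaches at one or two of them, this is precisely the situation forbidden by Lemma~\ref{cutvertex}, and we are done. The only remaining, tight possibility is $\deg(w)=5$ with $C$ attaching at exactly three consecutive neighbours $m_1,m_2,m_3$; here Lemma~\ref{cutvertex} does not apply verbatim, and I would instead re-run its proof. Since all of $C$'s attaching edges lie inside a single face of $G$ minus $C$, one can split $G$ at $w$ into $C\cup\{w\}$ and its complement, colour each by the minimality of $G$, identify the outer face of one piece with the relevant face of the other, and recombine: the (at most five) edge-colours at $w$ and the shared face-colour can be reconciled after a suitable permutation of the nine colours, yielding a nice colouring of $G$ and the desired contradiction.

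I expect the main obstacle to be exactly this planarity bookkeeping, in two places. First, one must argue cleanly that the far side of the non-facial triangle is non-empty and hangs off $w$ alone, so that $w$ is genuinely a cut-vertex of the controlled type. Second, in the tight case one must check carefully that the split-and-recombine colouring is proper and nice — in particular that the colour of the shared face and the edge-colours around $w$ do not conflict. It is precisely the hypotheses $\deg(u)=3$ and $\deg(w)\le5$ that keep the attaching arc short enough for Lemma~\ref{cutvertex} (or the re-run of its proof) to bite.
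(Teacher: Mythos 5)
Your argument opens exactly as the paper's proof does: Lemma~\ref{smalladj} gives $uw\in E$, Lemma~\ref{lem:a1} rules out $uvw$ being a face, and the Jordan curve argument shows that the side $\mathscr{I}$ of the triangle away from $ux$ is non-empty and is attached to the triangle through $w$ alone. Where you diverge is in bounding the attachments. The paper makes one further observation that you miss: if $w$ had \emph{no} neighbour on the other side $\mathscr{O}$ (the side containing the third neighbour $x$ of $u$), then the component of $G-u$ containing $x$ would meet $u$ only in $\{x\}$, which Lemma~\ref{cutvertex} forbids; hence $w$ has a neighbour in $\mathscr{O}$, so it has at most $5-3=2$ neighbours in $\mathscr{I}$, and Lemma~\ref{cutvertex} applied at $w$ finishes at once. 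This observation makes your ``tight'' case ($\deg(w)=5$ with three attachments in $\mathscr{I}$) vacuous: in that case $w$ uses all five slots on $u$, $v$ and the three attachments, so it has no neighbour in $\mathscr{O}$, and the contradiction comes from applying Lemma~\ref{cutvertex} at $u$ --- no re-run of its proof is needed. That said, your proposed re-run (split at $w$ into $C\cup\{w\}$, with three edges at $w$, and the rest, with two; normalise both shared face colours to $1$; then permute the colours of one piece within $\{2,\ldots,9\}$ so the two edge-colour sets at $w$ become disjoint, which is possible since $3+2=5$ edges compete for eight non-face colours) does go through, so your argument is correct, just longer than necessary. One small imprecision to patch: when there are three attachment slots, a single component of $\mathscr{I}$ need not meet $w$ in a \emph{consecutive} block --- two components can interleave, say one attached at $m_1,m_3$ and another at $m_2$ --- so ``attaches at two of them'' is not always verbatim the situation of Lemma~\ref{cutvertex}; the fix is that any interleaving forces some component attached at a single vertex, which the lemma does forbid. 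This is also why the paper's counting (at most two $\mathscr{I}$-neighbours of $w$ in total, necessarily consecutive in the rotation) is the cleaner invariant: containment in a consecutive pair then holds for every component automatically.
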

\begin{proof}
Suppose on the contrary that $u$ has degree $3$ and $w$ has degree at most $5$. Lemma~\ref{smalladj} implies that
$u$ and $w$ are adjacent. Note that $uvw$ cannot be a face by Lemma~\ref{lem:a1}.
Let $u'$ be the neighbour of $u$ besides $v$ and $w$.  By Jordan's curve theorem, the curve $uvw$ splits the plane
into two parts, $\mathscr{I}$ and $\mathscr{O}$ with $u'\in \mathscr{O}$. First, note that
$w$ has a neighbour in $\mathscr{O}$, for otherwise $u$ would be a cut-vertex that contradicts Lemma~\ref{cutvertex}.
Moreover, Lemma~\ref{lem:a1} implies that $w$ has a neighbour in $\mathscr{I}$. Consequently, $w$ has either one or two neighbours in
$\mathscr{I}$, and hence $w$ is a cut-vertex that contradicts Lemma~\ref{cutvertex}.
\end{proof}

The following demonstrates that configuration A2 is reducible.
\begin{lemma}
\label{lem:a2}
Let $u,v,w,x$ be vertices of $G$ with 
$\deg(v)=2$ and $\deg(x)\le3$. Then $uvwx$ is not a face
of $G$.
\end{lemma}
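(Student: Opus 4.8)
The plan is to argue by contradiction, assuming $uvwx$ (in cyclic order, with $v$ the $2$-vertex and $x$ the $(\sle3)$-vertex opposite to $v$) is a face of $G$. First I would pin down the local degrees. Both $uv$ and $vw$ are incident to the $4$-face $uvwx$, which is an $(\sle4)$-face, and they share the same second incident face $g$ (since $\deg(v)=2$). If $g$ were an $(\sle4)$-face, then Lemma~\ref{uv1112} applied to $uv$ (with $s=2$) would force $\deg(u)+\deg(v)\ge11$, impossible as $\deg(v)=2$; hence $g$ is an $(\sge5)$-face, $s=1$, and Lemma~\ref{uv1112} gives $\deg(u)=\deg(w)=8$ with both $uv$ and $vw$ tight. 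Now fix a nice colouring $c$ of $G-uv$ and set $\alpha:=c(g)$. Tightness of $uv$ (via the Observation following Lemma~\ref{uv1112}) says that ${\mathcal E}(u)$, $\{c(vw)\}$ and $\{\alpha\}$ partition $\{1,\ldots,9\}$; in particular $\alpha\notin{\mathcal E}(u)$, and $c(xu)$ differs from both $\alpha$ and $c(vw)$.

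The heart of the argument is a recolouring mimicking the proof of Lemma~\ref{lem:a1}, with the single edge $uw$ of the triangle there replaced by the path through $x$ from $u$ to $w$ (edges $xu$ and $wx$). I would split on whether $\alpha\in{\mathcal E}(w)$. If $\alpha\in{\mathcal E}(w)$, then ${\mathcal E}(w)$ misses exactly one colour $\xi$; recolouring $vw$ with $\xi$ and then colouring $uv$ with $c(vw)$ yields a nice colouring of $G$, a contradiction. So assume $\alpha\notin{\mathcal E}(w)$, whence in particular $c(wx)\ne\alpha$. The main move is then: recolour $xu$ with $\alpha$ (legal at $u$ since $\alpha\notin{\mathcal E}(u)$) and colour $uv$ with the freed colour $c(xu)$. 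This succeeds unless $\alpha$ is obstructed at $x$, that is, unless either the $(\sge5)$-face $h'$ on the far side of $xu$ is coloured $\alpha$, or $\deg(x)=3$ and the third edge $xy$ is coloured $\alpha$.

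To clear these obstructions I would exploit the symmetric move made available by $\alpha\notin{\mathcal E}(w)$: recolour $wx$ with $\alpha$, then $vw$ with the freed colour $c(wx)$, then $uv$ with $c(vw)$; this is blocked only if the far face $h$ of $wx$ is coloured $\alpha$, or $\deg(x)=3$ with $c(xy)=\alpha$. When $\deg(x)=2$ (so $h=h'$), should that single face be coloured $\alpha$ I would instead recolour $xu$ with $c(vw)$ — permissible since $c(vw)\notin{\mathcal E}(u)$ and $c(vw)\ne c(wx)$ — and colour $uv$ with $c(xu)$. When $\deg(x)=3$, the faces $h$ and $h'$ are both incident to the edge $xy$, so they cannot both be coloured $\alpha$ unless $h=h'$; but $h=h'$ means $xy$ is a cut-edge, excluded by Lemma~\ref{cutvertex}. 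Hence one of the two moves goes through as long as $c(xy)\ne\alpha$.

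The residual case $\deg(x)=3$ with $c(xy)=\alpha$ is where I expect the main difficulty to lie. Here I would first recolour $xu$ with $c(vw)$ as above and colour $uv$ with $c(xu)$; this works unless $h'$ is an $(\sge5)$-face coloured $c(vw)$. In that final configuration the plan is to free the colour $\alpha$ at $x$ by recolouring the edge $xy$ off $\alpha$, after which the main move recolouring $xu$ with $\alpha$ applies. The delicate point is guaranteeing this recolouring of $xy$: the low degree of $x$ keeps few colours forbidden from the $x$-side, but if the far endpoint $y$ has large degree one cannot recolour $xy$ greedily. To handle this I would either invoke a two-colour (Kempe-type) exchange on the colours $\alpha$ and $c(vw)$ to shift $\alpha$ off $xy$, or re-run the entire argument after deleting $vw$ instead of $uv$ so as to exploit the $u$–$w$ symmetry of the configuration; in each subcase the aim is to reach a proper recolouring and thereby extend $c$ to a nice colouring of $G$, contradicting the choice of $G$.
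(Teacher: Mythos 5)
Your reduction to $\deg(u)=\deg(w)=8$ with $uv$ and $vw$ tight, your elimination of the case $\alpha\in{\mathcal E}(w)$, and your three moves (recolour $xu\gets\alpha$; recolour $wx\gets\alpha$ and shift; recolour $xu\gets c(vw)$ and shift) are all correct, and this is the same local-recolouring strategy the paper uses. The genuine gap is in the residual case $\deg(x)=3$, $c(xy)=\alpha$, and $h'$ coloured $c(vw)$: \emph{neither} of the two remedies you propose there works. The Kempe exchange on $\{\alpha,c(vw)\}$ fails at its very first step: to move $\alpha$ off $xy$ the chain must recolour $xy$ with $c(vw)$, but $xy$ is incident to the face $h'$, which in precisely this case carries the colour $c(vw)$; an edge Kempe chain never changes face colours, so this conflict cannot be repaired by extending the chain. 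Re-running the whole argument on $G-vw$ instead of $G-uv$ also fails: minimality hands you a \emph{different} nice colouring $c'$ of $G-vw$, and the mirrored argument applied to $c'$ can get stuck in its own mirrored residual case (with $c'(xy)=\alpha'$ and $h$ coloured $c'(uv)$). Since the two runs concern two unrelated colourings, their conclusions cannot be combined into a contradiction; the $u$--$w$ symmetry has to be exploited \emph{within a single colouring}.

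The missing move is the $w$-side mirror of your alternative move, performed on the colouring $c$ you already have: colour $uv$ with $c(vw)$ and swap the colours of $vw$ and $wx$, i.e.\ $uv\gets c(vw)$, $vw\gets c(wx)$, $wx\gets c(vw)$. All checks pass mechanically ($c(vw)\notin{\mathcal E}(u)$; $c(vw)\ne\alpha$; at $x$ one needs $c(vw)\ne c(ux)$ and $c(vw)\ne c(xy)=\alpha$, both true; the face $g$ sees the colours $c(vw)$ and $c(wx)$, neither equal to $\alpha$), except that $wx\gets c(vw)$ must also avoid the colour of $h$ --- and that is guaranteed because $h\ne h'$ (otherwise $xy$ would be a cut-edge, forbidden by Lemma~\ref{cutvertex}) and $h$, $h'$ share the edge $xy$, so $h$ is either uncoloured or coloured differently from $h'$, whose colour is $c(vw)$. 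This is exactly the adjacency observation you already made for the colour $\alpha$, now applied to $c(vw)$, and it is how the paper finishes: there one first deduces $c(xx')=\alpha$, then notes that the two faces at $x$ other than $uvwx$ have distinct colours, assumes without loss of generality (after shifting the uncoloured edge from $uv$ to $vw$) that the face at $wx$ avoids $c(vw)$, and performs this same swap.
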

\begin{proof}
Suppose on the contrary that $uvwx$ is a face of $G$.
There exists a
partial \col{9} $c$ of $G$ in which only $uv$ and the
$(\sle4)$-faces are uncoloured. Let $\alpha$ be the colour, if any, assigned
to the $(\sge5)$-face incident to both $uv$ and $vw$, and let $\beta_{ux}$ and $\beta_{wx}$ be the colours, if any,
assigned to the $(\sge5)$-faces incident to $ux$ and $wx$, respectively.

By Lemma~\ref{cutvertex},
observe that $\alpha \notin \{\beta_{ux}, \beta_{wx},
c(ux), c(wx)\}$.
Since $uv$ is tight, the sets ${\mathcal E}(u)$, $\{c(vw)\}$ and $\{\alpha\}$
are pairwise disjoint.
Since $vw$ is tight, we
deduce that $\alpha\notin{\mathcal E}(w)$, for otherwise we could colour
$uv$ with $c(vw)$ and next recolour $vw$ with a colour from
$\{1,\ldots,9\}\setminus{\mathcal E}(w)$.
Hence $\alpha \notin {\mathcal E}(u) \cup {\mathcal E}(w) \cup
\{\beta_{ux},\beta_{wx}\}$.

Let $x'$ be the vertex adjacent to $x$
distinct from $u$ and $w$. We must have $c(xx')=\alpha$, otherwise we could
colour $uv$ with $c(ux)$ and next recolour $ux$ with $\alpha$. Since $\beta_{ux}
\neq \beta_{wx}$, at least one of $\beta_{ux}$ and $\beta_{wx}$ is distinct from
$c(vw)$. Observing that we can colour $uv$ with $c(vw)$ and next
uncolour $vw$, we may assume without loss of generality that $\beta_{wx} \neq c(vw)$.
As a result, colouring $uv$ with $c(vw)$, and next swapping the colours
of $vw$ and $xw$ yields a
nice colouring of $G$; a contradiction.
\end{proof}

The following verifies that configurations C4 and C5 are reducible.

\begin{lemma}
Let $uvw$ and $vwx$ be triangles of $G$ such that $wx$ is incident to
two $(\sle 4)$-faces.
\begin{enumerate}
\item \label{lem:c4}
At least one of $u$ and $x$ has degree at least $4$.
\item \label{lem:c5}
If $uv$ is tight, then $\deg(v)+\deg(x)\ge12$.
\end{enumerate}
\end{lemma}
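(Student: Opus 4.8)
The plan is to follow the same strategy as the proofs of Lemmas~\ref{lem:uvw7} and~\ref{lem:a1}: start from a nice colouring of $G-uv$, exploit the tightness of well-chosen edges to pin down which colours are forced, and then find a recolouring that either extends to all of $G$ (a contradiction) or forces the claimed degree bound. Since both parts concern the same two triangles $uvw$ and $vwx$ sharing the edge $vw$, with $wx$ incident to two $(\sle4)$-faces, I would set up the common notation once. By the Observation following Lemma~\ref{uv1112}, take a partial \col{9} $c$ of $G$ in which only $uv$ and the $(\sle4)$-faces are left uncoloured, and let $\alpha_{vw}$, $\alpha_{uw}$ denote the colours (if present) of the $(\sge5)$-faces incident to $vw$ and $uw$; because $wx$ is incident to \emph{two} $(\sle4)$-faces, there is no free $(\sge5)$-face colour on the edges $wx$ (this is what makes $wx$ usable as a tight edge in the counting).

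For part~\eqref{lem:c5} I would assume $uv$ is tight, so ${\mathcal E}(u)$, ${\mathcal E}(v)$ and $\{\alpha_{uv}\}$ partition $\{1,\ldots,9\}$ exactly as in Lemma~\ref{lem:uvw7}. Applying the argument of that lemma to the triangle $uvw$ gives the containment ${\mathcal E}(u)\cup\{\alpha_{uv}\}\subseteq{\mathcal E}(w)\cup\{\alpha_{vw}\}$ (swap $uv$ with $c(vw)$ and recolour $vw$ by any escaping colour). The new ingredient is to run a second, parallel swapping argument along the edge $vx$ or $wx$ inside the triangle $vwx$: since $wx$ carries two $(\sle4)$-faces, any colour on ${\mathcal E}(v)\cup\{\alpha_{vw}\}$ that is missing from ${\mathcal E}(x)$ would let me recolour $vw$ and then $wx$ to free up $uv$. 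This should force ${\mathcal E}(v)\cup\{\alpha_{vw}\}\subseteq{\mathcal E}(x)\cup\{\text{(at most the colour on the shared face)}\}$, and combining the two containments with a count of $\{1,\ldots,9\}$ yields $\deg(v)+\deg(x)\ge12$. The exact bookkeeping of how many $(\sge5)$-face colours appear — zero on $wx$, possibly one on $vw$ — is what produces the precise constant $12$ rather than $11$.

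For part~\eqref{lem:c4} I would argue by contradiction: suppose $\deg(u)=\deg(x)=3$. Here neither $uv$ nor $vx$ need be tight a priori, so instead I would work directly with the small colour sets. With $\deg(u)=3$ and $\deg(x)=3$, the sets ${\mathcal E}(u)$ and ${\mathcal E}(x)$ are tiny, and the two $(\sle4)$-faces on $wx$ mean that after uncolouring $uv$ there are abundantly many admissible colours. The plan is to show that the forbidden set ${\mathcal C}(uv)$ (the colours on edges and $(\sge5)$-faces meeting $uv$) has size at most $8$, giving a free colour and hence a nice colouring of $G$ — a contradiction. This is essentially a parity/counting argument: the shared low-degree structure means the edges around the two triangles cannot collectively use all nine colours once $uv$ is released.

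The main obstacle I anticipate is the careful accounting of the $(\sge5)$-face colours and of possible coincidences among the faces — in particular ensuring (via Lemma~\ref{cutvertex} and the reducibility of A1/C1 already available) that the faces flanking $vw$, $uw$ and $wx$ are genuinely distinct and that $uvw$, $vwx$ really are faces rather than degenerating through a cut-vertex. As in Lemma~\ref{lem:a2}, I expect to need a short sub-argument, using that the two faces on $wx$ are both $(\sle4)$-faces, to rule out the colour on a shared $(\sge5)$-face from interfering; getting the constant exactly $12$ in part~\eqref{lem:c5} (rather than $11$) will hinge on correctly charging that there is no $(\sge5)$-face contributing a colour along $wx$.
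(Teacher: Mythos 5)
Your plan for part~\eqref{lem:c4} rests on showing $\abs{{\mathcal C}(uv)}\le8$ after uncolouring $uv$, but this counting cannot work: the hypothesis only caps $\deg(u)$ and $\deg(x)$ at $3$, while $v$ and $w$ may have degree $8$, so ${\mathcal C}(uv)$ can have up to $(\deg(u)-1)+(\deg(v)-1)+1=10$ elements and in a minimal counterexample it genuinely contains all nine colours (Lemma~\ref{uv1112} forces $\deg(u)+\deg(v)\ge9+s$, and in the tight case the Observation gives ${\mathcal C}(uv)=\{1,\ldots,9\}$). The impossibility of greedily recolouring a single uncoloured edge is precisely what being a minimal counterexample means; every reducibility proof must use recolouring chains, not raw counting. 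The paper's proof of~\eqref{lem:c4} instead uncolours $wx$ --- which is tight, since $\deg(x)=3$ together with the two $(\sle4)$-faces forces $\deg(w)+\deg(x)=11$ --- and runs a multi-step argument: first $c(xx')\in{\mathcal E}(v)$, then $\alpha_{vx}\ne c(vw)$ (so the colours of $vw$ and $vx$ can be swapped), then $\{c(uu'),\alpha_{uw}\}=\{c(vw),c(vx)\}$, and finally a recolouring that puts $c(vx)$ on both $uv$ and $wx$, uncolours $vx$, and disposes of the remaining case by a count over the edges at $v$ and $x$. None of this machinery is present, even in outline, in your sketch.

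For part~\eqref{lem:c5}, your two containments do not deliver the constant $12$. The proposed containment ${\mathcal E}(v)\cup\{\alpha_{vw}\}\subseteq{\mathcal E}(x)\cup\{\cdot\}$ is not obtainable by the standard swap: any colour of ${\mathcal E}(v)$ already appears at the endpoint $v$ of $vx$, so $vx$ can never be recoloured with it, and the two-step chain you gesture at (recolour $wx$, then $vw$, then colour $uv$) yields only a disjunction of conditions, not this containment. Moreover, even granting it, the count gives $\deg(v)-1\le\deg(x)+1$, i.e.\ $\deg(v)\le\deg(x)+2$, which is not the claimed bound (consider $\deg(v)=5$, $\deg(x)=3$). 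Note also that $\alpha_{vw}$ does not exist, since $vw$ lies on the two triangles $uvw$ and $vwx$. The paper's actual route is: show that the unique colour $\xi\notin{\mathcal E}(w)$ lies in ${\mathcal E}(v)$ and that $\alpha_{uw}=\xi$, so the colours of $uw$ and $vw$ can be exchanged; this exchange buys the strengthened containment ${\mathcal E}(u)\cup\{\alpha_{uv},c(vw),c(vx)\}\subseteq{\mathcal E}(x)\cup\{\alpha_{vx}\}$, whose count yields only $\deg(v)+\deg(x)\ge11$; the equality case $\deg(v)+\deg(x)=11$ is then eliminated by a further recolouring ($uv\gets c(vx)$, $vx\gets c(wx)$, $wx\gets\xi$), and it is exactly there --- recolouring $wx$ with no face colour obstructing --- that the hypothesis of two $(\sle4)$-faces on $wx$ is spent. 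You correctly anticipated that the jump from $11$ to $12$ hinges on $wx$, but the mechanism you propose (containments plus counting alone) cannot close that gap.
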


\begin{proof}
\ref{lem:c4}. Suppose on the contrary that both $u$ and $x$ have degree
less than $4$. Then both have degree $3$ by Lemma~\ref{lem:a1}.
Let $u'$ (respectively $x'$) be the neighbour of $u$ (respectively $x$) distinct from $v$ and $w$.
Let $c$ be a
partial \col{9} of $G$ in which only $wx$ and the
$(\sle4)$-faces are uncoloured.
Let $\alpha_{uv}$, $\alpha_{uw}$ and $\alpha_{vx}$ be
the colours, if any, assigned to the $(\sge5)$-faces incident to $uv$, $uw$ and $vx$, respectively.

Since the edge $wx$ is tight, the sets ${\mathcal E}(w)$ and ${\mathcal E}(x)$ are
disjoint.
Hence $c(xx')\in {\mathcal E}(v)$, otherwise we could colour $wx$
with $c(vw)$ and recolour $vw$ with $c(xx')$.

We first assert that $\alpha_{vx}\neq c(vw)$. Otherwise,
$\abs{{\mathcal C}(vx)}=\abs{{\mathcal E}(v)}\le8$ and
there exists
$\xi\in\{1,2,\ldots,9\}\setminus{\mathcal C}(vx)$.
Now, colouring $wx$ with $c(vx)$ and recolouring $vx$ with $\xi$ yields
a nice colouring of $G$; a contradiction.
Consequently, we can safely swap the colours of $vw$ and $vx$, if necessary.

Our next assertion is that $\{c(uu'),\alpha_{uw}\}=\{c(vw),c(vx)\}$.
For, if $c(vx)\notin\{c(uu'),\alpha_{uw}\}$, we can colour $wx$ with
$c(uw)$ and recolour $uw$ with $c(vx)$; a contradiction. The same argument after swapping
the colours of $vw$ and $vx$ shows that
$c(vw)\in\{c(uu'),\alpha_{uw}\}$. Thus, up to swapping the colours of
$vw$ and $vx$, we may assume that $c(vx)=\alpha_{uw}$.

Let us recolour $uv$ with $c(vx)$, colour $wx$ with $c(vx)$ and
uncolour $vx$. The obtained colouring is proper,
since $\alpha_{uv}\neq\alpha_{uw}=c(vx)$ and ${\mathcal E}(w)\cap{\mathcal E}(x)=\emptyset$.
Now, if $vx$ cannot be coloured greedily, then for the obtained
colouring
${\mathcal E}(v)\cup{\mathcal E}(x)\cup\{\alpha_{vx}\}=\{1,2,\ldots,9\}$.
But then, since there are at most ten in the set of edges incident to $v$ or $x$, two of which ($uv$ and $wx$) have the same colour and one of which is uncoloured, it follows that
$c(xx')\notin{\mathcal E}(v)\cup{\mathcal E}(w)$.  Now we
can colour $vx$ with $c(vw)$ and colour $vw$ with $c(xx')$ to obtain a
nice colouring of $G$; a contradiction.
\bigskip

\ref{lem:c5}. Suppose on the contrary that $uv$ is tight and $\deg(v)+\deg(x)=11$.
Let $c$ be
a partial \col{9} of $G$ in which only $uv$ and the
$(\sle4)$-faces are uncoloured.
Let $\alpha_{uv}$, $\alpha_{uw}$ and $\alpha_{vx}$ be
the colours, if any, assigned to the $(\sge5)$-faces incident to $uv$, $uw$ and $vx$, respectively.
Since the edge $uv$ is tight, the sets
${\mathcal E}(u)$, ${\mathcal E}(v)$ and $\{\alpha_{uv}\}$ are pairwise disjoint.

Let $\xi$ be the colour in $\{1,\ldots,9\}\setminus{\mathcal E}(w)$ (unique since we can assume that $\deg(w)=8$ without loss of generality). Then $\xi \in {\mathcal E}(v)$,
otherwise we could colour $uv$ with $c(vw)$ and recolour $vw$ with $\xi$.
It follows that $\xi \notin {\mathcal E}(u)$. Therefore, $\alpha_{uw}=\xi$,
otherwise we could colour $uv$ with $c(uw)$ and recolour $uw$ with $\xi$.
Thus, the colours of $uw$ and $vw$ may be exchanged, if necessary.

Let us show that ${\mathcal E}(u)\cup\{\alpha_{uv},c(vw)\} \subseteq
{\mathcal E}(x)\cup\{\alpha_{vx}\}$.
First, if there is a colour
$\gamma \in
{\mathcal E}(u)\cup\{\alpha_{uv}\}$ that is not in
${\mathcal E}(x)\cup\{\alpha_{vx}\}$, then we can recolour $vx$ with
$\gamma$ and then colour
$uv$ with $c(vx)$ to obtain a nice colouring of $G$, which is a contradiction. Similarly, by exchanging the colours of $uw$ and $vw$, we conclude that $c(vw) \in {\mathcal E}(x)\cup\{\alpha_{vx}\}$.

Since $uv$ is tight and $\deg(v)+\deg(x)=11$, 
we deduce that ${\mathcal E}(x)\cup\{\alpha_{vx}\}={\mathcal
E}(u)\cup\{\alpha_{uv},c(vw),c(vx)\}$.
(Indeed, $\abs{{\mathcal E}(x)\cup\{\alpha_{vx}\}}\le\deg(x)+1=12-\deg(v)$,
and $\abs{{\mathcal E}(u)\cup\{\alpha_{uv}\}}=9-(\deg(v)-1)=10-\deg(v)$.)
In particular, $\alpha_{vx}\neq
c(wx)$ and $\xi \notin {\mathcal E}(x) \setminus \{c(vx)\}$. Now, colour $uv$ with $c(vx)$, and then
recolour $vx$ with $c(wx)$ and $wx$ with $\xi$ to obtain a
nice colouring of $G$; a contradiction.
\end{proof}

The next lemma implies that configurations D1--D4 are reducible.

\begin{lemma}\label{lem.D1-4}
Let $vwx$ be a triangle of $G$ and $u$ a neighbour of $v$ distinct from
$x$ and $w$.
If
$vx$ is incident to two $(\sle4)$-faces,
then either $uv$ or $vw$ is not tight.
\end{lemma}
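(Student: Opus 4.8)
The plan is to argue by contradiction in the same style as the preceding recolouring lemmas. Suppose both $uv$ and $vw$ are tight. Start from a nice colouring $c$ of $G-vw$ (or of $G$ with only one edge and the $(\sle4)$-faces uncoloured), obtained by the Observation following Lemma~\ref{uv1112}. The key structural fact to exploit is that $vx$ is incident to \emph{two} $(\sle4)$-faces, so no $(\sge5)$-face abuts $vx$; thus the only colours ``blocking'' $vx$ come from $\mathcal{E}(v)\cup\mathcal{E}(x)$ together with possibly $c(wx)$ through the shared triangle. I would begin by leaving $vw$ uncoloured, recording $\alpha_{uv}$ and $\alpha_{vw}$ for the colours (if any) of the $(\sge5)$-faces incident to $uv$ and $vw$, and use the tightness of both edges to pin down the partition of $\{1,\dots,9\}$ furnished by the Observation.

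\textbf{Main steps.} First, using that $uv$ is tight, I would apply the Observation to get that $\mathcal{E}(u)$, $\mathcal{E}(v)$ and $\{\alpha_{uv}\}$ partition $\{1,\dots,9\}$. Second, leveraging that $vw$ is tight and that $vw$ lies in the triangle $vwx$, I would establish containments of the form $\mathcal{E}(u)\cup\{\alpha_{uv}\}\subseteq\mathcal{E}(v)\cup\{\text{stuff at }w\}$ by the standard ``colour $uv$ with $c(vw)$ and recolour $vw$'' swap, exactly as in Lemmas~\ref{lem:uvw7} and \ref{lem:a2}. The crucial third step is to bring in $vx$: because $vx$ touches two $(\sle4)$-faces, recolouring along the path $u\,v\,w$ and then rerouting through $vx$ (colouring $uv$ with $c(vx)$ after recolouring $vx$ with a freed colour) should produce a proper extension unless a tight saturation condition $\mathcal{E}(v)\cup\mathcal{E}(x)\cup\{\alpha_{vx}\}=\{1,\dots,9\}$ holds. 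I would then show this saturation is incompatible with the partition forced by the tightness of $uv$ and $vw$, because the colour $c(wx)$ (appearing in $\mathcal{E}(v)$ via the triangle) and the absence of an $(\sge5)$-face at $vx$ over-constrain the count, yielding a free colour and hence a nice colouring---the desired contradiction.

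\textbf{Expected obstacle.} The hard part will be the bookkeeping in the third step: identifying precisely which colour can be freed at $vx$ and verifying that the chain of recolourings $uv\mapsto c(vw)$, $vw\mapsto\xi$, and $vx\mapsto(\cdot)$ remains proper at every incidence, particularly at the two $(\sle4)$-faces incident to $vx$ (which we have uncoloured and may ignore) versus any $(\sge5)$-face incident to $vw$ or $uv$. The interplay between $\alpha_{uv}$, $\alpha_{vw}$ and the triangle colour $c(wx)$ must be tracked carefully, since the configurations D1--D4 differ precisely in how many $(\sle4)$-faces sit on $uv$ and $vw$; I expect the lemma to be stated uniformly so that the single hypothesis ``$vx$ incident to two $(\sle4)$-faces'' drives the contradiction regardless of those variations. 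A secondary subtlety, handled as in earlier proofs, is invoking Lemma~\ref{cutvertex} to ensure the relevant faces are genuinely distinct so that the colour swaps do not collide.
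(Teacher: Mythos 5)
Your setup is the right one (and matches the paper's opening moves): uncolour $vw$, use the tightness of $vw$ to split $\{1,\ldots,9\}$ into ${\mathcal E}(v)$, ${\mathcal E}(w)$ and the face colour(s) at $vw$, and exploit the fact that $vx$ sees no $(\sge5)$-face, so that non-recolourability of $vx$ forces ${\mathcal E}(v)\cup{\mathcal E}(x)=\{1,\ldots,9\}$, i.e.\ ${\mathcal E}(w)\cup\{\alpha\}\subseteq{\mathcal E}(x)$. The gap is in your third step: the claim that this saturation is \emph{incompatible by counting} with the partitions coming from the tightness of $uv$ and $vw$, ``yielding a free colour''. No such counting contradiction exists. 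Take $\deg(u)=\deg(w)=3$, $\deg(v)=\deg(x)=8$, with each of $uv$, $vw$, $vx$ incident to two $(\sle4)$-faces (the shape of configuration D2), and colour so that ${\mathcal E}(v)=\{1,\ldots,7\}$ with $c(uv)=1$, $c(vx)=2$; ${\mathcal E}(w)=\{8,9\}$ with $c(wx)=8$; ${\mathcal E}(u)=\{1,8,9\}$; and ${\mathcal E}(x)$ any $8$-set containing $\{2,8,9\}$. Every partition and saturation condition you list holds simultaneously, so no free colour falls out of arithmetic; the obstruction you hoped for simply is not there. There are also two local errors: $c(wx)$ does \emph{not} appear in ${\mathcal E}(v)$ (the edge $wx$ is not incident to $v$; indeed the partition forces $c(wx)\notin{\mathcal E}(v)$), and there is no colour $\alpha_{vx}$, as you yourself observed that no $(\sge5)$-face abuts $vx$.

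What the proof actually needs --- and what your outline never touches --- is the third edge $wx$ of the triangle. The paper applies the non-recolourability argument to $wx$ as well, obtaining ${\mathcal E}(v)\cup\{\alpha\}\subseteq{\mathcal E}(x)\cup\{\beta\}$, where $\beta$ is the colour of the $(\sge5)$-face at $wx$ (if any); combining this with ${\mathcal E}(w)\cup\{\alpha\}\subseteq{\mathcal E}(x)$ and the count $9=\abs{{\mathcal E}(v)\cup{\mathcal E}(w)\cup\{\alpha\}}\le\abs{{\mathcal E}(x)\cup\{\beta\}}\le9$ yields $\beta\notin{\mathcal E}(x)$, which is exactly what makes it legal to swap the colours of $vx$ and $wx$. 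Separately, tightness and non-recolourability of $uv$ give that ${\mathcal E}(u)$, ${\mathcal E}(v)$ and the face colours $S$ at $uv$ overlap only in $c(uv)$, hence $c(vx)\notin{\mathcal E}(u)\cup S$. The contradiction is then produced not by counting but by an explicit chain of recolourings: colour $vw$ with $c(uv)$, recolour $uv$ with $c(vx)$, and swap the colours of $vx$ and $wx$; each step is proper precisely because of the facts above. The verification you deferred to an ``expected obstacle'' paragraph --- tracking $\beta$ and the swap at $wx$ --- is not residual bookkeeping; it is the core of the argument, and without it the proof does not close.
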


\begin{proof}
Suppose on the contrary that both $uv$ and $vw$ are tight. Let $c$ be a partial
\col{9} of $G$ in which only $vw$ and the $(\sle4)$-faces
are left uncoloured. Let $\alpha$ be the colour, if any, assigned to the
$(\sge5)$-face incident to $vw$.
Since $vw$ is tight, we know that the sets ${\mathcal E}(v)$,
${\mathcal E}(w)$ and $\{\alpha\}$ form a partition of
$\{1,2,\ldots,9\}$. In particular,
$c(vx)\notin{\mathcal E}(w)$ and $c(wx)\notin{\mathcal E}(v)$.

If an edge $e$ that is adjacent to $vw$
could be properly recoloured with a colour $\xi$, then colouring $vw$ with
$c(e)$ and recolouring $e$ with $\xi$ would yield a nice colouring of $G$;
a contradiction. Applying this to $vx$ yields that
${\mathcal E}(w)\cup\{\alpha\}\subseteq{\mathcal E}(x)$, since
${\mathcal C}(vx)={\mathcal E}(x)\cup{\mathcal E}(v)$, and as we noted above
$\{1,\ldots,9\}\setminus{\mathcal E}(v)={\mathcal E}(w)\cup\{\alpha\}$.
Applying the same remark to $wx$, we obtain
${\mathcal E}(v)\cup\{\alpha\}\subseteq{\mathcal E}(x)\cup\{\beta\}$,
where $\beta$ is the colour, if any, assigned to the $(\sge5)$-face
incident to $wx$.

Since
$9=\abs{{\mathcal E}(v)\cup{\mathcal E}(w)\cup\{\alpha\}}\le\abs{{\mathcal E}(x)\cup
\{\beta\}}\le9$,
we deduce that $\beta\notin{\mathcal E}(x)$.
Therefore, we can safely swap the colours
of $vx$ and $wx$ if needed (recalling that
${\mathcal E}(v)\cap{\mathcal E}(w)=\emptyset$).

Let $S$ be the set of colours of the $(\sge5)$-faces incident to $uv$. Thus,
$\abs{S}=2-s$ where $s$ is the number of $(\sle4)$-faces incident to $uv$.
Again, we apply the same arguments as above to $uv$: since $uv$ cannot be
recoloured, we deduce that
${\mathcal E}(u)\cup{\mathcal E}(v)\cup S=\{1,2,\ldots,9\}$.
But
$\abs{{\mathcal E}(u)\cup{\mathcal E}(v)\cup
S}\le\deg(u)-1+\deg(v)-1+2-s=\deg(u)+\deg(v)-s\le9$ since $uv$ is tight
and $vw$ is uncoloured.
Consequently,
${\mathcal E}(u)$, ${\mathcal E}(v)$ and $S$ are pairwise disjoint.
In particular,
$c(vx)\notin{\mathcal E}(u)\cup S$. As a result, colouring $vw$
with $c(uv)$, then recolouring $uv$ with $c(vx)$ and finally swapping
the colours of $vx$ and $wx$ yields a nice colouring of $G$; a
contradiction.
\end{proof}

The next lemma implies that configurations E1 and E2 are reducible.

\begin{lemma}\label{lem.E}
Let $v$ be an $8$-vertex of $G$ with neighbours $v_0,v_1,\ldots,v_7$ in
anti-clockwise order. Assume that $v_iv_{i+1}$ is an edge for
$i\in\{0,1,2,3\}$, and that $v_1$ an
$(\sle4)$-vertex. If $v_0$ is an $(\sle6)$-vertex or $vv_0$ is adjacent to two $(\sle4)$-faces, then $v_3$ is an $(\sge4)$-vertex.
\end{lemma}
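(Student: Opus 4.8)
The plan is to argue by contradiction, assuming $\deg(v_3)\le3$, and to derive a nice colouring of $G$ (contradicting minimality) by transferring the colour of the light spoke $vv_1$ onto the spoke $vv_3$. First I would pin down the local degrees. Since $v_3$ is adjacent to the three distinct vertices $v$, $v_2$ and $v_4$, the assumption forces $\deg(v_3)=3$, so $N(v_3)=\{v,v_2,v_4\}$; likewise $v_1$ is adjacent to $v$, $v_0$ and $v_2$, so the hypothesis gives $\deg(v_1)\in\{3,4\}$. The spoke $vv_3$ is then \emph{tight}: it is incident to the two triangles $vv_2v_3$ and $vv_3v_4$, and $\deg(v)+\deg(v_3)-2=8+3-2=9$. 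I would also record, by applying Lemma~\ref{lem:uvw7} to the triangle $vv_2v_3$ along the edge $vv_3$ (whose endpoint degrees sum to $11=10+1$, with the triangle $vv_1v_2$ already contributing a small face at $vv_2$), that $\deg(v_2)=8$; this rigidity will be useful in the final case analysis.

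Next I would remove $vv_3$. As both faces incident to $vv_3$ are triangles, the Observation following the definition of nice colourings yields a partial \col{9} of $G$ in which only $vv_3$ and the $(\sle4)$-faces are uncoloured. Because $vv_3$ is tight and carries no $(\sge5)$-face, the Observation after Lemma~\ref{uv1112} gives that $\mathcal{E}(v)$ and $\mathcal{E}(v_3)$ partition $\{1,\ldots,9\}$, with $\mathcal{E}(v_3)=\{c(v_2v_3),c(v_3v_4)\}$ and $|\mathcal{E}(v)|=7$. The central move is then to set $c(vv_3):=c(vv_1)$ (valid, since $c(vv_1)\in\mathcal{E}(v)$ is disjoint from $\mathcal{E}(v_3)$ and the two faces at $vv_3$ are small) and to recolour $vv_1$ with a colour $\xi\notin\mathcal{E}(v)\cup(\mathcal{E}(v_1)\setminus\{c(vv_1)\})$. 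Such a $\xi$ exists precisely when $\mathcal{E}(v_3)\not\subseteq\mathcal{E}(v_1)$, in which case we obtain a nice colouring of $G$, a contradiction. Note moreover that $c(v_1v_2)\neq c(v_2v_3)$ since these edges share $v_2$, so in the remaining \emph{trapped} case ($\mathcal{E}(v_3)\subseteq\mathcal{E}(v_1)$) the colour $c(v_2v_3)$ must be carried by $v_0v_1$ or (when $\deg(v_1)=4$) by the fourth edge at $v_1$.

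To finish the trapped case I would invoke the hypothesis on $v_0$ to recolour the edge of $v_1$ carrying the trapped colour $c(v_2v_3)$ — namely $v_0v_1$ — with a colour $\eta\notin\{c(v_2v_3),c(v_3v_4)\}$, thereby untrapping $c(v_2v_3)$ and allowing the central move to go through with $\xi=c(v_2v_3)$. The feasibility of recolouring $v_0v_1$ is exactly where the two disjuncts each supply the needed unit of slack: if $\deg(v_0)\le6$ then $|\mathcal{E}(v_0)|$ is small enough that the forbidden set for $v_0v_1$ leaves a free colour, while if $vv_0$ is incident to two $(\sle4)$-faces then the face-colour obstruction at $v_0v_1$ is removed. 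One must check throughout that the final colouring is proper at $v_1$ and at $v$ (here $\mathcal{E}(v_3)\cap\mathcal{E}(v)=\emptyset$ and $\deg(v_2)=8$ are used), and that $\eta$ can be chosen to avoid recreating the trap.

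The main obstacle I anticipate is precisely this trapped-case bookkeeping: first, correctly identifying which edge at $v_1$ carries the trapped colour (the subcase where it sits on the fourth, possibly high-degree, neighbour of $v_1$ rather than on $v_0v_1$ needs separate handling, and is where the forced value $\deg(v_2)=8$ and the structure at $v_0$ must be combined), and second, verifying that in each of the two hypothesis regimes the counting genuinely leaves a usable colour for the escape recolouring with no room to spare. The rest of the argument is routine propagation of the ``if an adjacent edge could be recoloured, we would be done'' principle used in Lemmas~\ref{lem:a1} and~\ref{uv1112}.
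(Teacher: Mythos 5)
Your setup and ``central move'' coincide with the paper's opening: delete $vv_3$, observe that $vv_3$ is tight so that ${\mathcal E}(v)$ and ${\mathcal E}(v_3)$ partition $\{1,\dots,9\}$, and note that unless ${\mathcal E}(v_3)\subseteq{\mathcal E}(v_1)$ one can colour $vv_3$ with $c(vv_1)$ and properly recolour $vv_1$. (Your side remark that Lemma~\ref{lem:uvw7} forces $\deg(v_2)=8$ is correct, though you never actually use it.) The genuine gap is in your resolution of the trapped case, which is the crux of the lemma. For the first disjunct your counting fails: to recolour $v_0v_1$ with a fresh colour $\eta$ you must avoid ${\mathcal E}(v_0)\setminus\{c(v_0v_1)\}$ (up to $5$ colours when $\deg(v_0)=6$), ${\mathcal E}(v_1)\setminus\{c(v_0v_1)\}$ (up to $3$ colours), and the colour $\beta$ of the possible $(\sge5)$-face incident to $v_0v_1$ --- up to $5+3+1=9$ forbidden colours, and nothing you have derived forces any overlap among these sets, so $\eta$ need not exist. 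For the second disjunct the claim is misdirected: the hypothesis concerns the two faces incident to $vv_0$, and neither of these is the face incident to $v_0v_1$ opposite the triangle $vv_0v_1$, so the obstruction $\beta$ is \emph{not} removed; worse, $\deg(v_0)$ may be $8$ in this regime, so greedy recolouring of $v_0v_1$ faces up to $7+3+1=11$ forbidden colours. In both regimes the trapped case does not close as you describe.

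What is missing is exactly the additional structure the paper extracts before its final move. Applying your own ``if an adjacent edge could be recoloured, we are done'' principle to $v_2v_3$ and to $vv_0$ yields ${\mathcal E}(v)\subseteq{\mathcal E}(v_2)\cup\{\alpha\}$ and ${\mathcal E}(v_3)\subseteq{\mathcal E}(v_0)\cup\{\gamma\}$, where $\alpha$ and $\gamma$ are the colours, if any, of the $(\sge5)$-faces at $v_2v_3$ and $vv_0$. The first containment forces $\alpha\ne c(vv_2)$, so the colours of $vv_2$ and $v_2v_3$ may be swapped; this swap handles your deferred ``fourth edge'' subcase uniformly (after swapping one may assume $c(v_0v_1)\in{\mathcal E}(v_3)$) and upgrades the containments to ${\mathcal E}(v_3)\cup\{c(vv_2)\}\subseteq{\mathcal E}(v_1)$ and ${\mathcal E}(v_3)\cup\{c(vv_2)\}\subseteq{\mathcal E}(v_0)\cup\{\gamma\}$. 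This is where the hypothesis on $v_0$ genuinely enters: it guarantees at least two (under the second disjunct, where $\gamma$ does not exist, at least three) colours common to ${\mathcal E}(v_0)$ and ${\mathcal E}(v_1)$, whence $\abs{{\mathcal E}(v_0)\cup{\mathcal E}(v_1)}\le 8$. Finally, the paper's closing move requires no fresh colour at all: it colours $vv_3$ with $c(vv_1)$, gives $vv_1$ the colour $c(v_0v_1)\in{\mathcal E}(v_3)$, and puts $c(vv_1)$ back onto $v_0v_1$; the counting just described is precisely what certifies $c(vv_1)\notin{\mathcal E}(v_0)\cup\{\beta\}$ and so legitimises this swap. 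Without these three ingredients --- the containment at $v_0$, the $vv_2$/$v_2v_3$ swap, and a colour-reusing rather than colour-consuming final move --- your trapped case cannot be completed.
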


\begin{proof}
Suppose on the contrary that $v_3$ is a $3$-vertex.
By the minimality of $G$, the graph $G-vv_3$ has a nice colouring and hence $G$ has a partial \col{9} $c$ in which only $vv_3$ and the $(\sle4)$-faces are left uncoloured.
Since $vv_3$ is tight, we deduce that $\abs{{\mathcal E}(v)\cup{\mathcal E}(v_3)}=9$ and ${\mathcal E}(v)\cap{\mathcal E}(v_3)=\emptyset$.

Let $\alpha$ be the colour, if any, of the $(\sge5)$-face incident with both $v_2v_3$ and $v_3v_4$.
If $v_2v_3$ can be recoloured with a colour $\xi$, then colouring $vv_3$ with $c(v_2v_3)$ and then $v_2v_3$ with $\xi$ would yield a nice colouring of $G$; a contradiction.
Thus, ${\mathcal E}(v)\subseteq{\mathcal E}(v_2)\cup\{\alpha\}$.

Let $j\in\{1,2\}$. If there exists a
colour $\xi\in{\mathcal E}(v_3)\setminus{\mathcal E}(v_j)$, then
colouring $vv_3$ with $c(vv_j)$ and then $vv_j$ with $\xi$
yields a nice colouring of $G$ (recalling that ${\mathcal E}(v_3)$ and
${\mathcal E}(v)$ are disjoint).
Therefore,
${\mathcal E}(v_3)\subseteq{\mathcal E}(v_j)$ for $j\in\{1,2\}$.
Letting $\gamma$ be the colour, if any, of the $(\sge5)$-face incident to $vv_0$ we similarly find that ${\mathcal E}(v_3)\subseteq{\mathcal E}(v_0)\cup\{\gamma\}$.

Since ${\mathcal E}(v_2)\cup\{\alpha\}\supseteq{\mathcal E}(v)\cup{\mathcal E}(v_3)=\{1,2,\ldots,9\}$ and $\abs{{\mathcal E}(v_2)\cup\{\alpha\}}\le9$, it follows that $\alpha \ne c(vv_2)$.
As
${\mathcal E}(v)\cap{\mathcal E}(v_3)=\emptyset$, this implies that the
colours of $vv_2$ and $v_2v_3$ can be freely swapped.
By doing so, we can conclude that ${\mathcal E}(v_3)\cup\{c(vv_2)\}\subseteq{\mathcal E}(v_j)$ for $j\in\{1,2\}$ and ${\mathcal E}(v_3)\cup\{c(vv_2)\}\subseteq{\mathcal E}(v_0)\cup\{\gamma\}$.

Since $\deg(v_1)=4$, we find that ${\mathcal E}(v_1)=\{c(vv_1),c(vv_2)\}\cup{\mathcal E}(v_3)$.  Furthermore, by swapping the colours of $vv_2$ and $v_2v_3$ if necessary, we may assume that $c(v_0v_1)\in{\mathcal E}(v_3)$.
Now,
if $v_0v_1$ could be recoloured with a colour $\xi$, then colouring
$vv_3$ with $c(vv_1)$, then $vv_1$ with $c(v_0v_1)$ and then $v_0v_1$ with $\xi$ would yield a nice colouring of $G$.
Thus, letting $\beta$ be the colour, if any, of the $(\sge5)$-face incident to
$v_0v_1$ we obtain ${\mathcal E}(v_0)\cup{\mathcal E}(v_1)\cup\{\beta\}=\{1,2,\ldots,9\}$.

Let us partition our analysis now based on if $v_0$ is an $(\sle6)$-vertex or if $vv_0$ is adjacent to two $(\sle4)$-faces.

Suppose we are in the former case.
Since ${\mathcal E}(v_3)\cup\{c(vv_2)\}\subseteq({\mathcal E}(v_0)\cup\{\gamma\})\cap{\mathcal E}(v_1)$,
we deduce that
$\abs{{\mathcal E}(v_0)\cup{\mathcal E}(v_1)}\le\deg(v_0)+\deg(v_1)-2\le8$.
Consequently, $\beta\neq c(vv_1)$ and $c(vv_1)\notin{\mathcal E}(v_0)$.
In particular, the colours of $vv_1$ and $v_0v_1$ can safely be
swapped if needed.
As a result, colouring $vv_3$ with $c(vv_1)$ and then swapping the
colours of $vv_1$ and $v_0v_1$ yields a nice colouring of $G$; a
contradiction.

Now suppose we are in the latter case.  Then there is no colour $\gamma$.
For $j\in\{0,1\}$, it cannot be that $c(vv_j) \in {\mathcal E}(v_{1-j})$ (and hence $c(vv_j) \in {\mathcal E}(v_0)\cap{\mathcal E}(v_1)$).  Otherwise, we would have,
using ${\mathcal E}(v_3)\cup\{c(vv_2)\}\subseteq {\mathcal E}(v_0)\cap{\mathcal E}(v_1)$,
that
$\abs{{\mathcal E}(v_0)\cup{\mathcal E}(v_1)}\le\deg(v_0)+\deg(v_1)-4\le8$, in which case, recolouring as we did in the last paragraph, we would reach a contradiction. However, for some $j\in\{0,1\}$, we must have $\beta\ne c(vv_j)$, and so the colours of $vv_j$ and $v_0v_1$ can be swapped safely.  Thus, colouring $vv_3$ with $c(vv_j)$ and then swapping the colours of $vv_j$ and $v_0v_1$ yields a nice colouring of $G$; a contradiction.
\end{proof}

In the following lemma, we show that configuration E3 is reducible.

\begin{lemma}\label{lem.E3}
Let $v$ be a triangulated $8$-vertex of $G$ with neighbours $v_0,v_1,\ldots,v_7$ in
anti-clockwise order. If $v_0$ is a $3$-vertex, then every vertex $v_i$ with
$i\ne0$ has degree at least $5$.
\end{lemma}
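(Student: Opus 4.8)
The plan is to remove the edge $vv_0$, exploit its tightness to pin down the colours at $v$ and $v_0$, and then show that low degree at any other neighbour would let us colour $vv_0$.

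First I would observe that, since $v$ is triangulated, $vv_0$ is incident to the two triangles $vv_7v_0$ and $vv_0v_1$, so it is tight ($\deg(v)+\deg(v_0)-s=8+3-2=9$). Taking a nice colouring $c$ of $G-vv_0$ and applying the Observation following Lemma~\ref{uv1112}, the sets ${\mathcal E}(v)$ and ${\mathcal E}(v_0)$ are disjoint and partition $\{1,\ldots,9\}$; write ${\mathcal E}(v_0)=\{a,b\}$ with $a=c(v_0v_1)$ and $b=c(v_0v_7)$, so that $|{\mathcal E}(v)|=7$ and $a,b\notin{\mathcal E}(v)$. Since $G$ has no nice colouring, $vv_0$ must remain uncolourable after any recolouring.

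The two neighbours of $v_0$ are immediate: the edge $v_0v_1$ lies on the triangle $vv_0v_1$, so Lemma~\ref{uv1112} (configuration B1) gives $\deg(v_0)+\deg(v_1)\ge10$ and hence $\deg(v_1)\ge7$, and symmetrically $\deg(v_7)\ge7$. For the remaining neighbours I would record a face-free forcing step. Recolouring a spoke $vv_i$ involves only its two incident triangles (which are uncoloured) together with edge colours; hence if $a\notin{\mathcal E}(v_i)$ for some $i\ge1$, then recolouring $vv_i$ with $a$ (legal, as $a\notin{\mathcal E}(v)\cup{\mathcal E}(v_i)$) frees $c(vv_i)$ at $v$, and colouring $vv_0$ with $c(vv_i)$ (legal, as $c(vv_i)\in{\mathcal E}(v)$ is disjoint from ${\mathcal E}(v_0)$) yields a nice colouring of $G$---a contradiction. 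Thus $a\in{\mathcal E}(v_i)$, and symmetrically $b\in{\mathcal E}(v_i)$, for every $i\ge1$; in particular each $v_i$ carries two non-spoke edges coloured $a$ and $b$, and every spoke is frozen, since ${\mathcal E}(v)\cup{\mathcal E}(v_i)=\{1,\ldots,9\}$.

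It then remains to rule out $\deg(v_j)\le4$ for $2\le j\le6$, and by the reflection $v_i\leftrightarrow v_{8-i}$ (which fixes $v_0$ and $v_4$) it suffices to treat $j\in\{2,3,4\}$. Here the argument becomes involved. Because every spoke is frozen and spoke permutations leave ${\mathcal E}(v)$ unchanged, the only way to create a free colour for $vv_0$ is to recolour $v_0v_1$ off $a$ (or $v_0v_7$ off $b$); but this is blocked precisely because $v_1$ (resp.\ $v_7$) has large degree. The plan is therefore to propagate a recolouring chain along the fan edges $v_0v_1,v_1v_2,\ldots,v_{j-1}v_j$, using the colours $a,b$ already forced onto the fan and terminating at the low-degree vertex $v_j$, whose many missing colours absorb the chain and finally permit recolouring $v_0v_1$ and colouring $vv_0$. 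The hard part will be that, unlike a spoke, a fan edge $v_iv_{i+1}$ bounds an outer face that may be a coloured $(\ge 5)$-face, and the intermediate vertices $v_1,\ldots,v_{j-1}$ may have degree $8$, so the chain need not propagate freely; controlling this demands a careful case analysis on the degrees of the intermediate vertices and on the sizes of the incident faces. I expect to dispatch the short-range cases by invoking the already-established reducibility of configurations D1, D2, E1 and E2 (which encode exactly these fan structures with a $3$-vertex and a nearby small vertex around a high-degree centre), and to reserve the chain recolouring above for the genuinely longer-range cases.
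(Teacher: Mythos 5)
Your setup is exactly the paper's: tightness of $vv_0$ giving ${\mathcal E}(v)\cap{\mathcal E}(v_0)=\emptyset$, and the forcing step showing $\{a,b\}={\mathcal E}(v_0)\subset{\mathcal E}(v_i)$ for all $i\ne0$. But from that point on there is a genuine gap: the case you yourself identify as the heart of the matter is never actually proved. The configurations you fall back on do not cover it: the D-lemma (Lemma~\ref{lem.D1-4}) needs \emph{both} spokes tight, which for a triangulated $8$-vertex forces $\deg(v_j)=3$, so it only handles $3$-vertices; and E1/E2 require the $3$-vertex and the $4$-vertex to be separated by exactly one fan vertex, i.e.\ $j=2$ or $j=6$. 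So after your reflection the cases $j\in\{3,4\}$ with $\deg(v_j)=4$ and all intermediate vertices of degree $8$ remain --- configuration E3 exists in the paper precisely because the other lemmas leave these positions open. For them you offer only a ``recolouring chain along the fan edges,'' whose obstructions (coloured $(\ge5)$-faces on the outer side of fan edges, degree-$8$ intermediate vertices) you acknowledge but do not overcome. An unexecuted plan with admitted difficulties is not a proof.

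The missing idea is much shorter than a chain, and it is a pigeonhole at $v_j$ itself. Since ${\mathcal E}(v_j)\supseteq\{a,b,c(vv_j)\}$ and $\deg(v_j)\le4$, at most one of the two spoke colours $c(vv_1)$, $c(vv_7)$ lies in ${\mathcal E}(v_j)$; say $c(vv_7)\notin{\mathcal E}(v_j)$. You cannot recolour $vv_j$ with $c(vv_7)$ directly, because that colour sits at $v$; so you first free it. Let $\alpha$ be the colour, if any, of the face incident to both $v_0v_1$ and $v_0v_7$ (the only possibly coloured face in sight, as $v$ is triangulated). A short argument shows $\alpha\ne c(vv_7)$: otherwise ${\mathcal E}(v_7)\cup\{\alpha\}={\mathcal E}(v_7)$ misses some colour $\xi$, which also avoids ${\mathcal E}(v_0)\subset{\mathcal E}(v_7)$, so recolouring $v_0v_7$ with $\xi$ and colouring $vv_0$ with $b$ gives a nice colouring. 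Hence the colours of $vv_7$ and $v_0v_7$ can be swapped; after the swap, $c(vv_7)$ is absent from ${\mathcal E}(v)$, so you recolour $vv_j$ with $c(vv_7)$ and colour $vv_0$ with $c(vv_j)$. This single swap-and-shift replaces your entire chain analysis, works uniformly for all $j\in\{2,\ldots,6\}$, and is how the paper concludes.
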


\begin{proof}
Suppose on the contrary that $v_j$ is an $(\sle4)$-vertex with
$j\in\{1,\ldots,7\}$. First, note that $j\notin\{1,7\}$ by Lemma~\ref{lem:uvw7} (the reducibility of configuration B2, in particular).
By the minimality of $G$, the graph $G-vv_0$ has a nice colouring, and
hence the graph $G$ has a partial \col{9} in which only
$vv_0$ and the $(\sle4)$-faces are left uncoloured.
Since $vv_0$ is tight and incident to two triangles,
we infer that $\abs{{\mathcal E}(v)\cup{\mathcal E}(v_0)}=9$ and
${\mathcal E}(v)\cap{\mathcal E}(v_0)=\emptyset$.

Note that ${\mathcal E}(v_0)\subset{\mathcal E}(v_i)$ for $i\ne0$,
for otherwise we could colour $vv_0$ with $c(vv_i)$ and then
recolour $vv_i$ with a colour in ${\mathcal E}(v_0)\setminus{\mathcal E}(v_i)$
to obtain a nice colouring of $G$ (recalling that
${\mathcal E}(v)\cap{\mathcal E}(v_0)=\emptyset$).
Since $\{c(vv_j)\}\cup{\mathcal E}(v_0)\subseteq{\mathcal E}(v_j)$ and
$\deg(v_j)\le4$, we deduce that one of $c(vv_1)$ and $c(vv_7)$ does not belong to
${\mathcal E}(v_j)$, say $c(vv_7)$.

Let $\alpha$ be the colour of the face incident to both $v_0v_1$ and
$v_0v_7$. We prove
that $\alpha\neq c(vv_7)$.
Indeed, suppose on the contrary that $\alpha=c(vv_7)$. Then, there exists a
colour $\xi$ that does not belong to
${\mathcal E}(v_7)\cup\{\alpha\}={\mathcal E}(v_7)$, since
$\deg(v_7)\le8$. As ${\mathcal E}(v_0)\subset{\mathcal E}(v_7)$, we deduce
that $\xi\notin{\mathcal E}(v_0)\cup{\mathcal E}(v_7)\cup\{\alpha\}$.
Therefore, colouring $vv_0$ with $c(v_0v_7)$ and then
$v_0v_7$ with $\xi$ yields a nice colouring of $G$; a
contradiction.
Hence, $\alpha\neq c(vv_7)$.
Consequently, we can freely
swap the colours of $vv_7$ and $v_0v_7$.
Now, colouring $vv_0$ with $c(vv_j)$, then recolouring $vv_j$ with
$c(vv_7)$ and last swapping the colours of $vv_7$ and $v_0v_7$
yields a nice colouring of $G$; a contradiction.
\end{proof}

The next lemma implies that configuration E4 is reducible.
\begin{lemma}\label{lem-exceptional}
Let $v$ be a $6$-vertex of $G$ with neighbours $u$ and $w$, and suppose $x\ne v$ is a neighbour of $w$.  Suppose $w$ is a $2$-vertex and $x$ is a $3$-vertex. Assume that $uv$ is adjacent to an $(\sle4)$-face. Then $u$ is an $(\sge6)$-vertex.
\end{lemma}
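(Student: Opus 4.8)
The plan is to suppose, for contradiction, that $\deg(u)\le5$ and to construct a nice colouring of $G$. Two preliminary reductions are immediate. Applying Lemma~\ref{uv1112} to $uv$, which lies on an $(\le4)$-face, gives $\deg(u)+\deg(v)\ge10$, so $\deg(u)\in\{4,5\}$; applying it to $vw$, where $\deg(v)+\deg(w)=8<10$, shows that $vw$ is incident to no $(\le4)$-face, so the two faces of $G$ meeting at the $2$-vertex $w$ are both $(\ge5)$-faces, say $f$ and $f'$. (In particular $vwx$ is not a triangle, as Lemma~\ref{lem:a1} already guarantees.)

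Next I would delete $uv$: by minimality $G-uv$ has a nice colouring, which I view as a partial \col{9} of $G$ in which only $uv$ and the $(\le4)$-faces are uncoloured. Let $g$ be the $(\le4)$-face incident to $uv$ and $g'$ the other face incident to $uv$. If $uv$ can be coloured we are done, so assume ${\mathcal C}(uv)=\{1,\dots,9\}$. The engine of the argument is the edge $vw$: its only coloured neighbours are the four edges of $v$ distinct from $uv,vw$, the edge $wx$, and the faces $f,f'$, so $vw$ has at most $7$ forbidden colours and can always be recoloured. The favourable situation is when $c(vw)$ occurs only on $vw$ among the neighbours of $uv$; then uncolouring $vw$ frees the colour $c(vw)$, I set $uv:=c(vw)$, and recolour $vw$ (which now sees at most $8$ colours). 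This disposes of every case in which $uv$ is tight, namely $\deg(u)=4$ with $uv$ on one $(\le4)$-face and $\deg(u)=5$ with $uv$ on two $(\le4)$-faces, since there the tight-edge observation forces ${\mathcal E}(u)$, ${\mathcal E}(v)$ and the colours of the incident $(\ge5)$-faces to partition $\{1,\dots,9\}$, making $c(vw)$ unique; it also disposes of $\deg(u)=4$ with $uv$ on two $(\le4)$-faces, where $uv$ has at most $8$ forbidden colours outright. More generally, whenever the single colour coincidence responsible for ${\mathcal C}(uv)=\{1,\dots,9\}$ does not involve $vw$, the same recolouring of $vw$ finishes the proof.

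The one remaining case, which I expect to be the main obstacle, is $\deg(u)=5$ with $uv$ incident to exactly one $(\le4)$-face and with the forcing coincidence lying at $vw$, that is, $c(vw)=c(g')$, or $c(vw)=c(uu')$ for a unique edge $uu'$ at $u$. Recolouring $vw$ alone cannot help here, because the offending colour is pinned a second time on the large face $g'$ or on the edge $uu'$, away from the low-degree part of the configuration, so one must alter that second element before $uv$ can be coloured. Since $g'$ is a large face and $u'$ may have large degree, no greedy recolouring need exist; the remedy I would pursue is a colour swap that exploits the exceptional structure around $v$ — the $2$-vertex $w$ lying on the two large faces $f,f'$ together with the $3$-vertex $x$ — to shift colours along the pendant path $v\,w\,x$ and liberate one colour for $uv$, in the spirit of the final exchange of the colours of $vw$ and $xw$ in the proof of Lemma~\ref{lem:a2}, possibly combined with the reducibility already established for triangles incident to $v$. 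Arranging this swap so that it frees exactly the needed colour at $uv$ without creating a new conflict is the heart of the matter.
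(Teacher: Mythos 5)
Your preliminary reductions and your ``favourable'' recolouring are all correct: the tight cases ($\deg(u)=4$ with one $(\sle4)$-face at $uv$, and $\deg(u)=5$ with two) and the outright-greedy case ($\deg(u)=4$ with two $(\sle4)$-faces) are disposed of exactly as you say. But the case you defer --- $\deg(u)=5$, exactly one $(\sle4)$-face at $uv$, and the single colour coincidence among the ten forbidden slots at $uv$ being $c(vw)\in{\mathcal E}(u)\cup\{c(g')\}$ --- is a genuine gap, and the remedy you sketch cannot close it. In that situation the nine slots other than $vw$ (the four coloured edges at $u$, the four coloured edges at $v$ besides $uv,vw$, and $g'$) already cover all nine colours; hence no recolouring of $vw$, and a fortiori no shifting of colours along the path $v\,w\,x$ (note $wx$ is not even adjacent to $uv$), can ever remove a colour from ${\mathcal C}(uv)$. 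The obstruction sits on an edge at $u$ or on the face $g'$, which your proposed swap never touches, so ``liberating a colour for $uv$'' by manipulating $vw$ and $wx$ alone is impossible, not merely delicate.

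The missing idea is the edge $vx$: since $w$ is a $2$-vertex and $\deg(v)=6$, Lemma~\ref{smalladj} forces $v$ and $x$ to be adjacent, and this extra edge at $v$ is what creates the needed slack. The paper's proof uses it to avoid your case analysis entirely: take the nice colouring of $G-uv$, additionally uncolour \emph{both} $vw$ and $vx$, and then colour greedily in the order $uv$, $vx$, $vw$. Each step sees at most $8$ colours: for $uv$ one gets $(\deg(u)-1)+(\deg(v)-1-2)+1\le8$ (two uncoloured edges at $v$, and the $(\sle4)$-face incident to $uv$ is uncoloured); for $vx$, with $vw$ still uncoloured, $(\deg(v)-1-1)+(\deg(x)-1)+2=8$; and finally for $vw$, $(\deg(v)-1)+(\deg(w)-1)+2=8$. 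This works uniformly for every $\deg(u)\le5$, including your unresolved case, precisely because the low degrees of $w$ and $x$ guarantee the deferred edges can always be filled in last.
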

\begin{proof}
First of all note that, due to Lemma~\ref{smalladj}, $v$ and $x$ are adjacent in $G$.
Suppose on the contrary that $u$ is an $(\sle5)$-vertex. By the minimality of $G$, the graph $G-uv$ has a nice colouring and hence $G$ has a partial \col{9} in which only $uv$ and the $(\sle4)$-faces are left uncoloured. Let us further uncolour $vw$ and $vx$. Now, $\abs{{\mathcal C}(uv)}\le\deg(u)-1+\deg(v)-1-2+1\le8$, so we may properly colour $uv$. It remains to colour $vw$ and $vx$.  Next, since $vw$ was uncoloured, we see that $\abs{{\mathcal C}(vx)}\le\deg(v)-1+\deg(x)-1-1+2=8$, so we may properly colour $vx$. Finally, we consider $vw$ and notice that $\abs{{\mathcal C}(vw)}\le\deg(v)-1+\deg(w)-1+2=8$, which does not prevent us from properly colouring $vw$. We have thereby obtained a nice colouring of $G$; a contradiction.
\end{proof}

It remains to prove Lemmas~\ref{lem-new} and~\ref{lem-newnew}.
\begin{proof}[Proof of Lemma~\ref{lem-new}]
By the minimality of $G$, the (proper) subgraph $G'$ formed from $G$ by deleting all loose edges incident to $f$ has a nice colouring.
To extend this to a nice colouring of $G$, it would suffice to properly colour $f$, as every loose edge on $f$ can then be greedily coloured. Indeed, a loose edge $uv$ on $f$ is incident to at most $\deg(u)-1+\deg(v)-1\le6$ other edges. Consequently, since $G$ cannot have a nice colouring, we conclude that $f$ is incident or adjacent to elements of all nine colours. Now, $f$ is adjacent to at most $d-q$ other faces, and incident to $d-x$ coloured edges. Therefore, $d-q+d-x\ge9$, as asserted.
\end{proof}

\begin{proof}[Proof of Lemma~\ref{lem-newnew}]
That $u' = v'$ follows directly from Lemma~\ref{smalladj}.  Note that by Lemma~\ref{lem:a1}, $uvu'$ is not a face.  By Jordan's curve theorem, the curve $uvu'$ splits the plane
into two parts, $\mathscr{I}$ and $\mathscr{O}$.  Then $u'$ must have three neighbours in $\mathscr{I}$ and three neighbours in $\mathscr{O}$, or else it would be a cut vertex of a type forbidden by Lemma~\ref{cutvertex}.  This implies that $u'$ has degree $8$.
\end{proof}

\section*{Acknowledgements}
We thank Artem Pyatkin for pointing out an error in a previous version of this work.  We are grateful to the anonymous referees for their thorough reading and helpful suggestions.

This work was initiated during a visit by Ross Kang to Centre \'Emile Borel, Institut Henri Poincar\'e in Paris for the thematic term ``Statistical physics, combinatorics and probability: from discrete to continuous models'', supported by the {\em European Research Council} (ERC), grant ERC StG 208471 ExploreMaps.

\bibliographystyle{plain}
\bibliography{kss10}
\end{document}